\newtheorem{theorem}{Theorem}[section]
\newtheorem*{thma}{Theorem A}
\newtheorem{Lemm}[theorem]{Lemma}
\theoremstyle{remark}
\newcommand{\m}{\mathbb}
\newcommand{\mm}{\mathcal}
\DeclareMathOperator{\Hol}{Hol}
\title{Trace Ideal Criteria for Generalized Integration Operators}
\author{G. Nikolaidis}
\email{nikolaidg@math.auth.gr}
\address{Department of Mathematics, Aristotle University of Thessaloniki, 54124, Greece}
\thanks{This research project was supported by the Hellenic Foundation for Research and Innovation (H.F.R.I.) under the `2nd Call for H.F.R.I. Research Projects to support Faculty Members \& Researchers' (Project Number: 4662).}
\subjclass{Primary 30H10; Secondary 30H30, 47G10}
\keywords{Hardy Spaces, Volterra type Operators, Integral Operators, Schatten-vonNeumann Classes}
\date{}
\begin{document}

\begin{abstract}
 For $g\in \Hol(\m D)$, we study the class of generalized integration operators $T_{g,a}$, acting on Hardy and Bergman spaces of the unit disc in the complex plane. This class of integral operators were introduced to study factorization theorems in the Hardy spaces of the unit disc. We completely characterize the space of symbols $g$, for which  $T_{g,a}$ belongs to the Schatten-von Neumann ideals of the Hardy and Bergman spaces.
\end{abstract}
\maketitle
\section{Introduction}
Let $\m{D}$ be the unit disc in the complex plane. By $\Hol(\m{D})$, we denote the space of all analytic functions defined in $\m{D}$. Let 
\begin{equation}\label{Voltera operator definition}
    Vf(z)=\int_{0}^{z}f(\zeta)\,d\zeta\qquad f\in\Hol(\m D)
\end{equation}
be the classical Volterra operator, acting on $\Hol(\m{D})$. For a fixed symbol $g\in\Hol(\m{D})$, we can define the operator
\begin{equation}\label{equation for Tg}
    T_gf(z)=\int_{0}^{z}f(\zeta)g'(\zeta)\,d\zeta\,,\qquad z\in\m{D}\,, f\in \Hol(\m D).
\end{equation}
 The study of integral operators such as $T_g$, originated from the fact that, as $g$ varies in $\Hol(\m{D})$, $T_g$ represents some significant classical operators. 
For instance, if $g(z)=z$, $T_g$ is the  Volterra operator $V$, while, when $g(z)=-\log(1-z)$, it coincides with the (shifted) Ces\'aro summation operator \cite{siskakis1987composition}. It is the above fact, which justify that $T_g$ operators are also known as generalized Volterra operators. 
  \par Originally, the class of generalized Volterra operators was introduced and studied in the context of the Hardy and Bergman spaces of the unit disc. The Hardy space $H^p$ is the space of all $f\in \Hol(\mathbb{D})$ such that

$$   \sup_{0\leq r<1}\int_{0}^{2\pi}|f(re^{it})|^p\,\frac{dt}{2\pi}<\infty\,,$$
while for $\alpha>-1$, the standard weighted Bergman space $A_\alpha^p$ is the space of all $f\in\Hol(\m{D})$ such that
 $$\|f\|_{A_\alpha^p}^p=\int_{\m{D}}|f(z)|^pdA_\alpha(z)<\infty\,,$$
 where $dA(z)$ is the normalized Lebesgue measure on $\m{D}$ and $dA_\alpha(z)=(\alpha+1)(1-|z|^2)^{\alpha}dA(z)$. 
	
	 In particular, Ch. Pommerenke \cite{pommerenke1977schlichte}, characterized the space of symbols $g$ for which $T_g$ is bounded on $H^2$. The complete characterization of symbols for which $T_g$ acts boundedly between different Hardy spaces was given in a series of papers \cite{aleman1995integral},\cite{aleman2001integral}. Subsequently, the study of such operators in various spaces of analytic functions has gained since considerable attention (see for example \cite{Wu2011Volterra}, \cite{pelaez2014weightedmemoir}, \cite{TanausuGalanopoulostentspaces},\cite{GomezCabello2025Volterra}, \cite{miihkinen2020volterra}).
     \par In the initial study of generalized Volterra operators, A. Aleman and A. Siskakis \cite{aleman1997integration},\cite{aleman1995integral} established sufficient and necessary conditions on the symbol $g\in\Hol(\m{D})$ for $T_g$ to belong to the Schatten-von Neumann ideals in the case of the Hilbert space $H^2$ and $A^2_\alpha$, when $\alpha>-1$. For the sake of notation, we write heuristically $H^2$ by $A^2_{-1}$. 
     \par In particular, for  $\alpha\geq -1$, $g\in \Hol(\m{D})$ and $1<p<\infty$, we have that $T_g\in S^p(A^2_\alpha)$ if and only if $g$ belongs to the Besov space $\mm{B}_p$, while if $0<p\leq 1$ and $T_g\in S^p(A^2_\alpha)$, then $T_g$ is the zero operator.
        
     For $0<p<\infty$ and $m\in\m{N}$ such that $mp>1$, the Besov spaces $\mm{B}_p$ contain all $f\in \Hol(\m{D})$ such that
 \begin{equation*}
     \int_{\m{D}}|f^{(m)}(z)|^p(1-|z|^2)^{mp}\frac{dA(z)}{(1-|z|^2)^2}<\infty\,.
 \end{equation*}
  The Besov spaces $\mm{B}_p$ do not depend on $m$, as shown in \cite[Lemma 5.16 and Theorem 4.28]{zhu2007operator} and the above quantity defines a seminorm on $\mm{B}_p$.
 	\par In this article, we study a further generalization of the integral operator $T_g$. 
An inspection of (\ref{equation for Tg}) shows that $T_g$ is a primitive of the first term of the derivative of the product $f g$. 
Applying the Leibniz rule of differentiation we get 
\begin{equation}\label{eq:Leibniz_dif}
 (T_gf)^{(n)}=\sum_{k=0}^{n-1}\binom{n-1}{k}f^{(k)} g^{(n-k)}.
\end{equation}

	Now, if we consider an arbitrary $n$-tuple $a=(a_0,a_1,\dots,a_{n-1})\in \mathbb{C}^{n}$, $a\neq \mathbf{0}$ and a symbol $g\in \Hol(\m D)$, we can define the following operator
 \begin{equation*}\label{equation defining Tga}
     T_{g,a}f=V^n\left(\sum_{k=0}^{n-1}a_kf^{(k)} g^{(n-k)}\right)\quad f\in \Hol(\m D)\,,
 \end{equation*}
 where $V^n$ is the $n$-th iterate of the Volterra operator (\ref{Voltera operator definition}). 
 It is then clear by \eqref{eq:Leibniz_dif} that the generalized Volterra operator is 
 a particular instance of the operator $T_{g,a}.$ 
 \par  The integral operator $T_{g,a}$ was introduced by N. Chalmoukis, and studied in the context of Hardy spaces of the unit disc. In his work \cite{chalmoukis2020generalized}, he characterized the space of symbols 
$g\in \Hol(\m D)$ for which $T_{g,a}\colon H^p \to H^q$ is bounded, for $0<p\leq q<\infty$. The case of $0<q<p<\infty$ was later resolved by the author and Chalmoukis in \cite{NikolaidisChalmoukis2024}. Around the same time, H. Arroussi et al. \cite{arroussi2024new} characterized Sobolev-Carleson measures for Bergman spaces and, as a consequence, characterized the space of symbols $g$ for which $T_{g,a}$ is bounded and compact between Bergman spaces. Since then, significant research has been devoted to identifying the space of symbols $g\in\Hol(\m{D})$, for which $T_{g,a}$ is bounded or compact on other spaces of analytic functions. Readers interested in further developments may refer to  \cite{tong2024generalizedvolterratypeintegraloperators},\cite{ArroussiTgafockspaces},\cite{SongxiaoLitentspacesTga}.
 \par The motivation for studying the operator $T_{g,a}$, extends beyond its generalization of the classical $T_g$. It also arrises from its connection to factorization theorem of holomorphic functions by W.Cohn \cite{cohn1999factorization} and a theorem of J. R\"atty\"a \cite{jounirattya2007lineareq} about higher order linear differential equations with holomorphic coefficients. For further details, we refer the interested reader to \cite[Theorems 1.5 and 1.6]{chalmoukis2020generalized}. Notably, the latter result has been extended to other spaces of analytic functions as well (see \cite{tong2024generalizedvolterratypeintegraloperators},\cite{ArroussiTgafockspaces},\cite{arroussi2024new}).
  
  \par Our study focuses on providing sufficient and necessary conditions on the symbol $g\in\Hol(\m{D})$ for $T_{g,a}$ to belong to the Schatten-von Neumann ideal on Hardy and Bergman spaces.  To achieve this, we first examine the case where $a$ is a standard unit vector of $\m{C}^n,$ a situation previously addressed from J. Du, S. Li and D. Qu \cite{DuLiQu2022generalized}. In this setting, the operators $T_{g,a}$ have the following form
 \begin{equation*}\label{Tgnk form equation}
     T_g^{n,k}(f)(z)=V^n(f^{(k)}g^{(n-k)})(z)\qquad 0\leq k< n.
 \end{equation*} 
 Their result is the following.
  \begin{thma}\hypertarget{TheoremLI}{L}et $\alpha\geq -1$, $n\in\m{N}$, $0\leq k\leq n-1$, $g\in\Hol(\m{D})$ and $0<p<\infty$.
      \begin{itemize}
         \item[(i)] If $\frac{1}{n-k}<p<\infty$, then $T_{g}^{n,k}\in S^p(A^2_\alpha)$ if and only if $g\in \mm{B}_p$.
         \item[(ii)] If $0<p\leq \frac{1}{n-k}$ and $T_{g}^{n,k}\in S^p(A^2_\alpha)$, then $T_{g}^{n,k}$ is the zero operator.
     \end{itemize}
  \end{thma}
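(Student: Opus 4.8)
The plan is to reduce $T_g^{n,k}$ to a multiplication operator acting between two weighted Bergman spaces of \emph{different} weights, to rewrite membership of that multiplication operator in $S^p$ as a Schatten‑class membership of a positive Toeplitz operator, and then to invoke Luecking's trace ideal criterion.

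\emph{Step 1: reduction to a multiplication operator.} Write $\partial=d/dz$ and $M_\psi h=\psi h$. The main tool is the Littlewood--Paley type norm equivalence
\[
\|h\|_{A^2_\gamma}\asymp\sum_{i=0}^{N-1}|h^{(i)}(0)|+\|h^{(N)}\|_{A^2_{\gamma+2N}},\qquad h\in\Hol(\m D),
\]
valid for all $\gamma\ge-1$ (with the convention $A^2_{-1}=H^2$; see \cite{zhu2007operator}, and for $H^2$ it is the classical Littlewood--Paley identity) and all $N\in\m N$. Since $\partial^N V^N=\mathrm{id}$ on $\Hol(\m D)$ while $V^N\partial^N h=h$ whenever $h$ vanishes to order $N$ at $0$, this equivalence says precisely that $\partial^N$ restricts to a Banach‑space isomorphism of $Z_{N,\gamma}:=\{h\in A^2_\gamma:h^{(i)}(0)=0,\ 0\le i<N\}$ onto $A^2_{\gamma+2N}$, with inverse $V^N$. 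Now $T_g^{n,k}f=V^n(f^{(k)}g^{(n-k)})$ has range in $Z_{n,\alpha}$ whenever $T_g^{n,k}$ maps $A^2_\alpha$ into itself, and one has the two operator identities (into $\Hol(\m D)$) $M_{g^{(n-k)}}=(\partial^nT_g^{n,k})\circ V^k$ and $T_g^{n,k}=V^n\circ M_{g^{(n-k)}}\circ\partial^k$, together with $\partial^kV^k=\mathrm{id}$ on $A^2_{\alpha+2k}$. Combining these with the ideal property of $S^p$ and the boundedness of $\partial^n,V^n\colon A^2_{\alpha+2n}\leftrightarrow Z_{n,\alpha}$ and of $\partial^k\colon A^2_\alpha\to A^2_{\alpha+2k}$, $V^k\colon A^2_{\alpha+2k}\to A^2_\alpha$, one gets
\[
T_g^{n,k}\in S^p(A^2_\alpha)\ \Longleftrightarrow\ M_{g^{(n-k)}}\in S^p\bigl(A^2_{\alpha+2k}\to A^2_{\alpha+2n}\bigr),
\]
with comparable Schatten norms; the same reasoning characterises boundedness and compactness. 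Put $j=n-k$ and $\beta=\alpha+2k$; then $\beta\ge-1$ and $\beta+2j=\alpha+2n$.

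\emph{Step 2: Toeplitz reformulation and Luecking's criterion.} A direct computation of inner products gives, for $f,h\in A^2_\beta$,
\[
\bigl\langle M_{g^{(j)}}f,\,M_{g^{(j)}}h\bigr\rangle_{A^2_{\beta+2j}}=\int_{\m D}f\,\overline h\,d\mu_g,\qquad d\mu_g(z)=|g^{(j)}(z)|^2(1-|z|^2)^{\beta+2j}\,dA(z),
\]
so $M_{g^{(j)}}^*M_{g^{(j)}}$ is the positive Toeplitz operator $T_{\mu_g}$ on $A^2_\beta$ with symbol $\mu_g$; in particular $M_{g^{(j)}}\in S^p$ iff $T_{\mu_g}\in S^{p/2}(A^2_\beta)$. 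By the trace ideal criterion of Luecking for positive Toeplitz operators (see, e.g., \cite{zhu2007operator}; valid for all exponents in $(0,\infty)$ and, with the above convention, for $\beta=-1$ too), $T_{\mu_g}\in S^{p/2}(A^2_\beta)$ precisely when $z\mapsto\mu_g(D(z,r))(1-|z|^2)^{-(\beta+2)}$ lies in $L^{p/2}\!\bigl(\m D,\,(1-|z|^2)^{-2}dA\bigr)$ for one (hence every) fixed $r\in(0,1)$, where $D(z,r)$ is the Bergman metric disc. Since $(1-|w|^2)^{\beta+2j}\asymp(1-|z|^2)^{\beta+2j}$ on $D(z,r)$, and by the standard comparison (via sub‑mean‑value estimates for holomorphic functions) between $L^{q}$‑integrals of the Bergman‑disc averages of $|g^{(j)}|^2$ and of $|g^{(j)}|^2$ itself, this condition is equivalent to
\[
\int_{\m D}|g^{(j)}(z)|^p\,(1-|z|^2)^{jp-2}\,dA(z)<\infty .
\]
Together with Step 1: $T_g^{n,k}\in S^p(A^2_\alpha)$ iff $\int_{\m D}|g^{(n-k)}(z)|^p(1-|z|^2)^{(n-k)p-2}\,dA(z)<\infty$.

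\emph{Step 3: identifying the space, and the dichotomy.} If $p>1/(n-k)$, then $(n-k)p>1$, so the last integral is a multiple of the defining seminorm of $\mm B_p$ expressed through the derivative of order $m=n-k$ — admissible since $mp>1$, and $\mm B_p$ does not depend on $m$. This gives assertion (i). If $p\le1/(n-k)$, then $(n-k)p-2\le-1$; and for any holomorphic $h\not\equiv0$ and any $s\ge1$ one has $\int_{\m D}|h|^p(1-|z|^2)^{-s}\,dA=+\infty$, because $r\mapsto\frac1{2\pi}\int_0^{2\pi}|h(re^{it})|^p\,dt$ is nondecreasing (as $|h|^p$ is subharmonic) and, for $r$ close to $1$, bounded below by a positive constant, while $\int_0^1(1-r^2)^{-s}r\,dr=+\infty$. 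Applying this with $h=g^{(n-k)}$, finiteness of the integral from Step 2 forces $g^{(n-k)}\equiv0$; hence $g$ is a polynomial of degree $<n-k$ and $T_g^{n,k}f=V^n(f^{(k)}g^{(n-k)})=0$, which is assertion (ii).

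The substantive input is Luecking's criterion, which must be used over the whole range $0<q<\infty$ — the small exponents being genuinely needed for the dichotomy in (ii), since mere trace‑class membership of $T_{\mu_g}$ does not force $g^{(n-k)}\equiv0$ — and, for $\alpha=-1$, $k=0$, in the Hardy‑space setting $\beta=-1$; this is the step I expect to be the main obstacle to a fully self‑contained write‑up. The remainder is bookkeeping: running the reduction of Step 1 uniformly over $\alpha\ge-1$ and tracking the weight shifts, which is exactly what moves the critical exponent from $p=1$ (the classical $T_g$ case, $n-k=1$) to $p=1/(n-k)$.
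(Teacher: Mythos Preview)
Your proof is correct. Note, however, that the paper does not itself prove Theorem~A: it quotes the result from Du, Li and Qu \cite{DuLiQu2022generalized} and uses only its sufficiency direction to dispatch the easy implication of the main Theorem~\ref{Theorem of paper Schatten class membership}. Your route --- factor $T_g^{n,k}=V^n\circ M_{g^{(n-k)}}\circ\partial^k$ via the Littlewood--Paley isomorphisms, identify $M_{g^{(n-k)}}^{*}M_{g^{(n-k)}}$ as the positive Toeplitz operator $T_{\mu_g}$ on $A^2_{\alpha+2k}$, and invoke Luecking's trace ideal criterion --- is the natural one, and the paper itself remarks that the classical $T_g$ result rests on Luecking's theorem. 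The paper's own machinery, developed for the general $T_{g,a}$, does specialise to give a proof of Theorem~A when $a$ is a standard basis vector, but by a different path: rather than reducing to a single Toeplitz operator, it estimates the Schatten norm directly by testing against atomic-decomposition operators built on a hyperbolic lattice (Sections~3--4), with separate arguments for $p\ge 2$ and $0<p<2$. Your factorisation is the more economical argument for the single-term operator $T_g^{n,k}$; the paper's approach is designed precisely to handle the possible cancellations among the several terms $a_k f^{(k)}g^{(n-k)}$, which is exactly what the clean reduction in your Step~1 cannot accommodate. (The case $\beta=-1$ you flag as a potential obstacle is in fact Luecking's original Hardy-space setting \cite{LUECKINGtraceidealcriteriatoeplitz}, so it is covered.)
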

  We present now the main result of this article.
 \begin{theorem}\label{Theorem of paper Schatten class membership}
     Let $\alpha\geq -1$, $n\in\m{N}$, $a=(a_0,\dots,a_{n-1})\in\m{C}^n\setminus\{\mathbf{0}\}$, $\lambda=\max\{k\colon a_k\neq 0\}$, $g\in \Hol(\m{D})$,  and $0<p<\infty$. 
     \begin{itemize}
         \item[(i)] If $\frac{1}{n-\lambda}<p<\infty$, then $T_{g,a}\in S^p(A^2_\alpha)$ if and only if $g\in \mm{B}_p$.
         \item[(ii)] If $0<p\leq \frac{1}{n-\lambda}$ and $T_{g,a}\in S^p(A^2_\alpha)$, then $T_{g,a}$ is the zero operator.
     \end{itemize}
 \end{theorem}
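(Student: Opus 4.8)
The starting point is the identity $T_{g,a}=\sum_{k=0}^{\lambda}a_kT_g^{n,k}$, which is immediate from the definition of $T_{g,a}$, the linearity of $V^n$, and the vanishing $a_k=0$ for $k>\lambda$. The plan is to reduce the whole statement to the single assertion
$$T_{g,a}\in S^p(A^2_\alpha)\quad\Longleftrightarrow\quad T_g^{n,\lambda}\in S^p(A^2_\alpha),\qquad 0<p<\infty ,$$
after which Theorem A applied to $T_g^{n,\lambda}$ (whose exponent threshold is precisely $\tfrac{1}{n-\lambda}$) delivers both (i) and (ii): in case (ii), $T_g^{n,\lambda}=0$ forces $g^{(n-\lambda)}\equiv 0$, hence $g$ is a polynomial of degree $<n-\lambda$, hence $g^{(n-k)}\equiv 0$ and $T_g^{n,k}=0$ for every $k\le\lambda$, so that $T_{g,a}=0$.

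The implication ``$T_g^{n,\lambda}\in S^p\Rightarrow T_{g,a}\in S^p$'' is the soft half. If $p>\tfrac1{n-\lambda}$, then Theorem A(i) gives $g\in\mm{B}_p$; since $k\le\lambda$ implies $\tfrac1{n-k}\le\tfrac1{n-\lambda}<p$, Theorem A(i) also yields $T_g^{n,k}\in S^p(A^2_\alpha)$ for every $k\le\lambda$, and $T_{g,a}=\sum_{k=0}^{\lambda}a_kT_g^{n,k}$ lies in $S^p$ because $S^p$ is a (quasi-)normed operator ideal, stable under finite sums. If instead $p\le\tfrac1{n-\lambda}$, then Theorem A(ii) forces $T_g^{n,\lambda}=0$, and $T_{g,a}=0\in S^p$ follows as above.

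The substantive half is ``$T_{g,a}\in S^p\Rightarrow T_g^{n,\lambda}\in S^p$''. First, a Littlewood--Paley identity together with the vanishing $(T_{g,a}f)^{(j)}(0)=0$ for $0\le j\le n-1$ identifies the quadratic form of $T_{g,a}^{*}T_{g,a}$ with a quantity comparable to $f\mapsto\int_{\m D}\big|\sum_{k=0}^{\lambda}a_kf^{(k)}(z)g^{(n-k)}(z)\big|^2(1-|z|^2)^{2n}\,dA_\alpha(z)$; since comparable quadratic forms of positive operators have comparable singular value sequences (min--max), it suffices to study this expression, and likewise $T_g^{n,\lambda}$ corresponds to the one-term form with $f^{(k)}g^{(n-k)}$ replaced by $f^{(\lambda)}g^{(n-\lambda)}$. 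To isolate the leading term one argues in the spirit of Luecking's discretization: fix a sufficiently separated lattice $\{z_j\}$ of $\m D$ in the hyperbolic metric and, for each $j$, a test function $f_j$ concentrated near $z_j$ and, crucially, vanishing to order $\lambda$ at $z_j$ --- for instance a suitable power of the Möbius involution $\varphi_{z_j}$ times a normalized reproducing kernel of $A^2_\alpha$ --- so that on a fixed Bergman ball about $z_j$ the sum $\sum_k a_kf_j^{(k)}g^{(n-k)}$ reduces, after Cauchy estimates on that ball bounding $g^{(n-k)}=(g^{(n-\lambda)})^{(\lambda-k)}$ in terms of $g^{(n-\lambda)}$, to a controlled perturbation of $a_\lambda f_j^{(\lambda)}g^{(n-\lambda)}$. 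Combining these test functions via Rademacher averaging and the Khinchine/Kahane inequalities, and using $T_{g,a}\in S^p$, one reaches the discretized bound $\sum_j|g^{(n-\lambda)}(z_j)|^p(1-|z_j|^2)^{(n-\lambda)p}\lesssim\|T_{g,a}\|_{S^p}^p$, which is exactly the discrete form of $T_g^{n,\lambda}\in S^p$ (equivalently of $g\in\mm{B}_p$ when $p>\tfrac1{n-\lambda}$, and of $g^{(n-\lambda)}\equiv 0$ when $p\le\tfrac1{n-\lambda}$).

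The main obstacle is precisely the decoupling of the summands $a_kf^{(k)}g^{(n-k)}$: they are not pointwise dominated by the leading term, since each derivative lost on $f$ is compensated by a derivative gained on $g$, so all summands exhibit the same boundary blow-up $(1-|z|^2)^{-n}$ and genuine cancellation among them is possible. The ``vanishing to order $\lambda$ at $z_j$'' device, together with sharp Bergman-ball estimates relating $g^{(n-k)}$ to $g^{(n-\lambda)}$, is what neutralizes this, and the remaining bookkeeping (choice of lattice, Bessel/frame estimates for the atoms, passage between the continuous Besov seminorm and its discretization) runs parallel to the proof of Theorem A. A secondary point requiring care is that $\tfrac1{n-\lambda}$ may be strictly less than $1$, so that $S^p$ is only a quasi-Banach ideal; hence the ideal property, the Khinchine/Kahane step, and Luecking's discretization lemmas must all be invoked in their full $0<p<\infty$ versions, and the threshold $p>\tfrac1{n-\lambda}$ enters exactly as the range in which the Besov seminorm computed with $n-\lambda$ derivatives, and the associated trace-ideal characterization, are available.
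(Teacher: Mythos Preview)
Your overall reduction --- show that $T_{g,a}\in S^p$ forces $T_g^{n,\lambda}\in S^p$, then invoke Theorem~A --- is exactly the paper's philosophy, and your sufficiency argument is the same as the paper's. The necessity argument, however, is organized quite differently. The paper does \emph{not} use a single family of test functions with a vanishing condition; instead it plugs in $n$ distinct families $A_j$ (for $p\ge 2$) or $B_j$ (for $p<2$), $0\le j\le n-1$, each producing an expression $D_j$ that is a \emph{different} linear combination of the quantities $g^{(n-k)}(1-|z|^2)^{n-k}$, and then inverts this linear system algebraically (Lemmas~2.8 and~2.9) to isolate $g^{(n-\lambda)}$. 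No Cauchy estimate relating $g^{(n-k)}$ to $g^{(n-\lambda)}$ is ever used, and no Khinchine/Rademacher step appears.

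Your ``vanishing to order $\lambda$ at $z_j$'' device is a genuine alternative for the case $p\ge 2$: with $f_j=\varphi_{z_j}^{\lambda}k_{z_j}$ one has $(T_{g,a}f_j)^{(n)}(z_j)=a_\lambda f_j^{(\lambda)}(z_j)g^{(n-\lambda)}(z_j)$ exactly, and subharmonicity plus the inequality $\sum_j\|Tf_j\|^p\le\|T\|_{S^p}^p$ (valid because $p/2\ge 1$) yields the discretized Besov bound directly, bypassing the linear system. This is arguably cleaner than the paper's Section~3.

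For $0<p<2$ your proposal has a real gap. Rademacher averaging and Khinchine/Kahane do not, by themselves, produce an inequality of the form $\sum_j\|Tf_j\|^p\lesssim\|T\|_{S^p}^p$; that inequality simply fails for $p<2$. The correct mechanism (Luecking, and the paper's Section~4) is the diagonal/off-diagonal decomposition $F=D+E$ on a sparse sublattice. Your vanishing trick controls only the \emph{diagonal} piece $\langle Fe_j,e_j\rangle$ via point evaluation at $z_j$; the off-diagonal entries $\langle Fe_j,e_i\rangle$ are integrals over Bergman balls that are \emph{not} centered at $z_j$, so the terms $a_kf_j^{(k)}g^{(n-k)}$ with $k<\lambda$ do not vanish there and must be estimated. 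On such a ball the smallness $|\varphi_{z_j}|^{\lambda-k}$ from the vanishing is of order $r^{\lambda-k}$, which is exactly cancelled by the $r^{-(\lambda-k)}$ in the Cauchy bound for $g^{(n-k)}$ in terms of $g^{(n-\lambda)}$, so there is no automatic absorption; one has to carry $g^{(n-k)}$ for all $k$ through the off-diagonal estimate and only at the end compare with the diagonal. This is doable, but it is precisely the new work beyond Theorem~A, not ``bookkeeping parallel to'' it. The paper sidesteps this by running the linear-system lemma on the diagonal terms for each $B_j$ separately, obtaining a lower bound for $\|D_J\|_{p/2}$ already expressed purely in terms of $g^{(n-\lambda)}$, and then bounding the off-diagonal $\|E_J\|_{p/2}$ by the same quantity times a factor $|A_R|$ that can be made small.
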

 \par When $T_{g,a}=T_g$ and then $\lambda=n-1$, we see that Theorem \ref{Theorem of paper Schatten class membership} reduces to the already known result of A. Aleman and A. Siskakis. We highlight also that our work focuses on proving that $g\in\mm{B}_p$ is necessary for $T_{g,a}\in S^p(A^2_\alpha)$. Indeed, since Schatten-von Neumann ideals are vector spaces, Theorem \hyperlink{TheoremLI}{A} implies that if $g\in\mm{B}_p$, then $T_{g,a}\in S^p(A^2_\alpha)$. Actually, another interpretation of our result is that if $T_{g,a}\in S^p(A^2_\alpha)$, then every term comprising the operator is forced to be in $S^p(A^2_\alpha)$ as well. In other words, there is no cancellation between the terms of $T_{g,a}$. 
 \par The result concerning the Schatten-von Neumann ideal membership for the operator $T_g$ relies primarily on the result of Luecking \cite{LUECKINGtraceidealcriteriatoeplitz}, which provides sufficient and necessary conditions for which Toeplitz operators belong to the Schatten-von Neumann ideals. Due to this connection, we follow his approach while also incorporating known techniques to handle possible cancellations between the terms of $T_{g,a}$, which  ultimately leads to the proof of Theorem \ref{Theorem of paper Schatten class membership}.

\par The article is constructed as follows. In the following section, we provide the necessary background material we require for the hyperbolic metric on $\m{D}$ and Schatten-von Neumman ideals, while also introducing properly the spaces of analytic functions which are involved. In Section 3, we fix $\alpha\geq -1$ and we prove Theorem \ref{Theorem of paper Schatten class membership} for $2\leq p<\infty$. The final section addresses the case of $0<p<2$.

\section{Preliminaries}

\subsection{Hyperbolic geometry}We recall some elementary facts from the hyperbolic geometry of the unit disc. We use the notation
 $$\rho(z,w)=\left|\frac{z-w}{1-\overline{z}w}\right|,\qquad z,w\in\m{D}$$
 for the {\it pseudohyperbolic} metric of $\m{D}$. Moreover,
 $$\beta(z,w)=\frac{1}{2}\log\frac{1+\rho(z,w)}{1-\rho(z,w)},\qquad z,w\in\m{D}$$
 is the {\it hyperbolic} metric of $\m{D}$.
  The set $\displaystyle{D(z,r)= \{w\in\m{D}\colon \beta(z,w)<r\}}$ is the hyperbolic disc, centered at $z$ with radius $r>0$. It is known that the modulus of analytic function satisfy sub-mean inequalities for hyperbolic discs. In fact, for every  $0<p<\infty$ and $r>0$ , there exists a constant $C$, depending only on $r$,  such that for all $z\in\m{D}$ and $f\in \Hol(\mathbb{D})$, 
  \begin{equation}\label{subharmonicity hyperbolic estimate}
     |f(z)|^p\leq \frac{C}{(1-|z|^2)^{2}}\int_{D(z,r)}|f(w)|^p\,dA(w).
 \end{equation}
 This inequality is proved in  \cite[Lemma 13, p. 66]{duren2004bergman}. It is also well known, see \cite[Proposition 4.5]{zhu2007operator}, that given $z\in\m{D}$ and $r>0$, there exists constants $C_1,C_2,C_3$, depending on $r$, such that
\begin{align}
    \frac{1}{C_1}|1-\overline{w}a|&\leq |1-\overline{w}z|\leq C_1|1-\overline{w}a|\,,\qquad w\in \m{D},a\in D(z,r),\nonumber\\
    \frac{1}{C_2}(1-|w|^2)&\leq 1-|z|^2\leq C_2(1-|w|^2)\,,\qquad w\in D(z,r),\label{geometric hyperbolic estimates}\\
    \frac{1}{C_3}|D(z,r)|&\leq (1-|z|^2)^2\leq C_3|D(z,r)|.\nonumber
\end{align}
where $|D(z,r)|$ denotes the Lebesgue volume of the set $D(z,r)$. We refer to the estimates of the form \eqref{geometric hyperbolic estimates} as hyperbolic estimates and shall be used repeatedly in the proofs of Theorem \ref{Theorem of paper Schatten class membership}.
\par A sequence $\{z_\lambda\}_\lambda\subset \m{D}$ is called $r$-{\it hyperbolically  separated} if there exists  a constant $r>0$ such that $\beta(z_k,z_\lambda)\geq r $ for $k\neq\lambda$, while is said to be an $r$-{\it lattice} in the hyperbolic distance, if it is $r/2$ - separated and  
$$\mathbb{D} = \bigcup_k D(z_k, r).$$
Hyperbolically separated sequences have the following useful ``finite overlapping'' property.

\begin{Lemm}\label{finite covering Lemma}
    Let $r>0$ and $Z=\{z_\lambda\}_\lambda$ be a $r$-hyperbolically separated sequence. There exists a positive constant $N$, depending only on $r$, such that for every $z\in\m{D}$ there exist at most $N$ hyperbolic discs $D(z_\lambda,r)$ such that $z\in D(z_\lambda,r).$  
\end{Lemm}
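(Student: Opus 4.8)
The plan is a volume-counting argument exploiting the fact that $r$-separated points cannot crowd together near any fixed location. First I would record the trivial reformulation that, since $\beta$ is symmetric, the condition $z\in D(z_\lambda,r)$ is equivalent to $z_\lambda\in D(z,r)$. Fixing $z\in\m{D}$, the indices to be counted are therefore precisely those $\lambda$ for which $z_\lambda$ lies in the hyperbolic ball $D(z,r)$; denote this collection by $\Lambda_z$, so that the goal is to bound $\#\Lambda_z$ by a constant depending only on $r$.

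The second step combines the triangle inequality for $\beta$ with the separation hypothesis to sandwich the relevant points between two volume scales. On the one hand, if $z_j,z_k\in\Lambda_z$ are distinct then $\beta(z_j,z_k)\geq r$, so the half-radius balls $\{D(z_\lambda,r/2)\}_{\lambda\in\Lambda_z}$ are pairwise disjoint: a common point $w$ would force $\beta(z_j,z_k)\leq\beta(z_j,w)+\beta(w,z_k)<r/2+r/2=r$, a contradiction. On the other hand, each such ball is contained in the larger ball $D(z,3r/2)$, since $\beta(z,z_\lambda)<r$ and $\beta(z_\lambda,w)<r/2$ give $\beta(z,w)<3r/2$. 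Thus I have a disjoint family of small balls packed inside one large ball.

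The final step compares volumes, using the estimates \eqref{geometric hyperbolic estimates} to keep every constant uniform in the location inside $\m{D}$. By the third estimate in \eqref{geometric hyperbolic estimates} applied with radius $r/2$ one has $|D(z_\lambda,r/2)|\gtrsim(1-|z_\lambda|^2)^2$, while the second estimate, valid because $z_\lambda\in D(z,r)$, gives $(1-|z_\lambda|^2)^2\gtrsim(1-|z|^2)^2$, with all implied constants depending only on $r$. The same third estimate with radius $3r/2$ yields $|D(z,3r/2)|\lesssim(1-|z|^2)^2$. Summing over the disjoint balls contained in $D(z,3r/2)$ then gives
\begin{equation*}
  \#\Lambda_z\cdot c(r)(1-|z|^2)^2\leq\sum_{\lambda\in\Lambda_z}|D(z_\lambda,r/2)|\leq|D(z,3r/2)|\leq C(r)(1-|z|^2)^2,
\end{equation*}
so $\#\Lambda_z\leq C(r)/c(r)=:N$ depends only on $r$, as required. (Equivalently, one may invoke the M\"obius invariance of the hyperbolic area, under which every ball of radius $r/2$ has one fixed area and every ball of radius $3r/2$ has another, and take $N$ to be the ratio of these two areas directly.)

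The computations are entirely routine; the only point demanding care is that every constant be uniform in $z\in\m{D}$, i.e.\ that the lower bound on the volume of the small balls and the upper bound on the volume of the large ball do not degenerate as $z\to\partial\m{D}$. This uniformity is exactly the content of the hyperbolic estimates \eqref{geometric hyperbolic estimates} (equivalently, of the M\"obius invariance of hyperbolic area), so once those are in hand no genuine obstacle remains.
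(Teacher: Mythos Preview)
Your argument is correct and is exactly the standard volume-counting proof; the paper itself does not give a proof but simply refers to \cite[Lemma 4.7]{zhu2007operator}, whose argument is essentially the one you wrote. There is nothing to add.
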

A proof of the above result can be found in \cite[Lemma 4.7]{zhu2007operator}.
\subsection{Hilbert Spaces of analytic functions} In this article, the main Hilbert spaces under consideration are the standard weighted Bergman spaces and the Hardy space. For convenience, we adopt the heuristic notation $A^2_{-1}$  to denote the Hardy space $H^2$. For fixed $\alpha\geq -1$, the inner product on these spaces is given by
\begin{align*}\langle f,h\rangle_{A^2_\alpha}&=\sum_{n=0}^{\infty}\frac{n!\Gamma(\alpha+2)}{\Gamma(\alpha+n+2)}a_n\overline{b_n}\,.
\end{align*}
where $\Gamma$ is the classical gamma function and $a_n,b_n$ are the Taylor coefficients of $f,h$ respectively. A consequence of Parseval's identity for analytic functions, allow us to compute the inner product using an integral involving the derivatives of $f,h$. This technique is the well known {\it Littlewood-Paley formula}. In fact, given $n>0$ and $f\in \Hol(\m D)$ such that $f(0)=\dots =f^{(n-1)}(0)=0$, there exists a constant $C$, depending only on $n$, such that
\begin{align}
    \frac{1}{C}\|f\|_{A^2_\alpha}^2\leq \int_{\m{D}}|f^{(n)}(z)|^2(1-|z|^2)^{2n+\alpha}dA(z)\leq C\|f\|_{A^2_\alpha}^2\,.\label{Littlewood Paley Bergman}
\end{align}
For $n=1$ and $\alpha=-1$, we refer to the proof of the above result in \cite[Lemma 3.2, Ch. VI \textsection 3]{garnett2006bounded}. For general $\alpha>-1$, we refer to \cite[Theorem 4.28]{zhu2007operator}. For an arbitrary $n>1$, one works by induction on \eqref{Littlewood Paley Bergman} to obtain the result.
\par As our operators are induced by analytic functions, we require suitable test functions to transfer the information of Schatten $p$-norm into conditions involving only the function $g\in \Hol(\m{D})$. We introduce these functions below. Let $i\in\m{Z}_+$, and $\gamma>1+\frac{\alpha+2}{2}$ be sufficiently large. We consider the functions 
$$K_{j,
w}(z) = \frac{z^j}{(1-\overline{w}z)^{\gamma+j}} \qquad z\in\m{D}\,.$$
A key estimate, used repeatedly in the proof of Theorem \ref{Theorem of paper Schatten class membership}, states that for $n\in\m{Z}_+$, we can find a constant $C$, depending only on $n,\gamma$ such that
\begin{equation}\label{equation to use for Kiw}
    |K_{j,w}^{(n)}(z)|\leq \frac{C}{|1-\overline{w}z|^{\gamma+j+n}}\,.
\end{equation}
This class of functions was introduced by Arroussi et al. \cite{arroussi2024new} to solve a system of linear equations related to the boundedness of the operator $T_{g,a}$ in Bergman spaces, a technique that we also employ in this article.
\par To proceed, we state a consequence of the atomic decomposition of functions $A^2_\alpha$, $\alpha>-1$, written in the language of our test functions.
\begin{Lemm}\label{Operator Lhmma pgeq 2}
    Let $\alpha\geq -1$, $\gamma>1+\frac{a+2}{2}$ be sufficiently large, $0\leq j\leq n-1$ and $\{z_n\}_n$ be a $r$-hyperbolically separated sequence in $\m{D}$ and $\{c_n\}_n\subset \ell^2$\,. Then, the functions
    \begin{align*}
    A_j(z)&=\sum_{n=1}^{\infty}c_n\frac{(1-|z_n|^2)^{\gamma-\frac{\alpha+2}{2}}}{(1-|z_n|^j\overline{z_n}z)^\gamma}\qquad z\in\m{D}\\    
    B_j(z)&=\sum_{n=1}^{\infty}c_n(1-|z_n|^2)^{{\gamma+j}-\frac{\alpha+2}{2}}K_{j,z_n}(z)\qquad z\in\m{D}\,,
    \end{align*}
    belong to $A^2_\alpha$ and there exists a positive constant $C$, depending only on $\alpha,\gamma,r$, such that
    $$\|A_j\|_{A^2_\alpha}^2,\|B_j\|_{A^2_\alpha}^2\leq C\sum_{n=1}^{\infty}|c_n|^2\,.$$
\end{Lemm}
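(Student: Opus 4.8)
The plan is to read both $A_j$ and $B_j$ as the image of $\{c_n\}\in\ell^2$ under a synthesis operator $\{c_n\}\mapsto\sum_n c_n f_n$, and to obtain the stated bound by proving that the associated Gram matrix is bounded on $\ell^2$, via Schur's test. In both cases the summands have the form $f_n(z)=(1-|z_n|^2)^{s}\varphi_n(z)$: for $A_j$ one takes $\mu=\gamma$, $s=\gamma-\frac{\alpha+2}{2}$ and $\varphi_n(z)=(1-|z_n|^j\overline{z_n}z)^{-\gamma}$, while for $B_j$ one takes $\mu=\gamma+j$, $s=\gamma+j-\frac{\alpha+2}{2}$ and $\varphi_n(z)=K_{j,z_n}(z)$, so that in either case $2s=2\mu-(\alpha+2)$ and $s>1$ --- the last being precisely where the hypothesis $\gamma>1+\frac{\alpha+2}{2}$ enters. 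For $B_j$ the estimate \eqref{equation to use for Kiw} gives $|\varphi_n^{(N)}(z)|\lesssim|1-\overline{z_n}z|^{-(\mu+N)}$ directly; for $A_j$ the same bound follows after also invoking the elementary inequality $|1-t\zeta|\gtrsim|1-\zeta|$ (valid for all $t\in[0,1]$, $\zeta\in\m D$, with an absolute constant), which is what lets us keep working with the genuinely separated sequence $\{z_n\}$ rather than with the possibly clustered points $|z_n|^jz_n$. Since we are on Hilbert spaces it suffices to prove $\bigl\|\sum_{n\in F}c_n f_n\bigr\|_{A^2_\alpha}^2\le C\sum_{n\in F}|c_n|^2$ for finite $F$ (the series then converges in $A^2_\alpha$, hence locally uniformly, to $A_j$ resp. $B_j$, so these lie in $A^2_\alpha$); expanding the square, this is equivalent to the Gram matrix $\bigl(\langle f_m,f_n\rangle_{A^2_\alpha}\bigr)_{m,n}$ being bounded on $\ell^2$ with norm $\le C(\alpha,\gamma,r)$, and by Schur's test (the matrix is self-adjoint) it is enough that $\sup_m\sum_n\bigl|\langle f_m,f_n\rangle_{A^2_\alpha}\bigr|\le C$.

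Next I would replace the abstract inner product by a genuine area integral via the Littlewood--Paley formula \eqref{Littlewood Paley Bergman}. With $N=1$ (so that $2N+\alpha>-1$ for every $\alpha\ge-1$), $\|h\|_{A^2_\alpha}^2\asymp|h(0)|^2+\int_{\m D}|h'(z)|^2(1-|z|^2)^{2+\alpha}\,dA(z)$ for all $h\in\Hol(\m D)$. The term $|h(0)|^2$ is harmless because $|\varphi_n(0)|\le1$ and $\sum_n(1-|z_n|^2)^{2s}<\infty$ --- the latter by the hyperbolic estimate $(1-|z_n|^2)^2\asymp|D(z_n,r)|$ from \eqref{geometric hyperbolic estimates} together with Lemma \ref{finite covering Lemma}, using $2s>2$. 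For the main term we bound $|h'(z)|\lesssim\sum_n|c_n|(1-|z_n|^2)^{s}|1-\overline{z_n}z|^{-(\mu+1)}$, square, and integrate term by term; since everything is nonnegative this reduces the problem to estimating $\sum_{m,n}|c_m|\,|c_n|\,\mm{I}_{m,n}$, where
\[
\mm{I}_{m,n}=(1-|z_m|^2)^{s}(1-|z_n|^2)^{s}\int_{\m D}\frac{(1-|z|^2)^{2+\alpha}\,dA(z)}{|1-\overline{z_m}z|^{\mu+1}\,|1-\overline{z_n}z|^{\mu+1}}\,.
\]
This is a positive, honest integral to which the standard two-variable Forelli--Rudin type estimate (see, e.g., \cite{zhu2007operator}) applies once $\gamma$, hence $\mu$, is large enough; since $2(\mu+1)-(2+\alpha)-2=2s$, it yields $\mm{I}_{m,n}\lesssim(1-|z_m|^2)^{s}(1-|z_n|^2)^{s}\,|1-\overline{z_m}z_n|^{-2s}$.

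It then remains to run Schur's test on the symmetric matrix $(\mm{I}_{m,n})$, i.e.\ to bound $\sum_n\mm{I}_{m,n}$ uniformly in $m$. For this one writes
\[
\sum_n\frac{(1-|z_n|^2)^{s}}{|1-\overline{z_m}z_n|^{2s}}\lesssim\sum_n\int_{D(z_n,r)}\frac{(1-|w|^2)^{s-2}\,dA(w)}{|1-\overline{z_m}w|^{2s}}\lesssim\int_{\m D}\frac{(1-|w|^2)^{s-2}\,dA(w)}{|1-\overline{z_m}w|^{2s}}\lesssim(1-|z_m|^2)^{-s}\,,
\]
where the first inequality is the hyperbolic estimates \eqref{geometric hyperbolic estimates} (namely $(1-|w|^2)\asymp(1-|z_n|^2)$ and $|1-\overline{z_m}w|\asymp|1-\overline{z_m}z_n|$ for $w\in D(z_n,r)$, plus $(1-|z_n|^2)^2\asymp|D(z_n,r)|$), the second is the finite-overlap Lemma \ref{finite covering Lemma}, and the third is the one-variable Forelli--Rudin estimate, legitimate since $s-2>-1$. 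Multiplying by $(1-|z_m|^2)^{s}$ gives $\sum_n\mm{I}_{m,n}\le C(\alpha,\gamma,r)$ for all $m$; hence the Gram matrix is bounded on $\ell^2$ and $\|A_j\|_{A^2_\alpha}^2,\|B_j\|_{A^2_\alpha}^2\le C\sum_n|c_n|^2$ with $C$ depending only on $\alpha,\gamma,r$, as claimed. (Once the factors $z^j$ and $|z_n|^j$ are dispensed with, this is precisely the synthesis half of the atomic decomposition of $A^2_\alpha$ for $\alpha>-1$, and of $H^2$ for $\alpha=-1$, and could simply be quoted.)

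I expect the only point requiring care to be the case $\alpha=-1$: since $H^2=A^2_{-1}$ is not an $L^2$-space against $dA_{-1}$, one cannot estimate $\langle f_m,f_n\rangle$ as an area integral at the outset, and the Littlewood--Paley passage above is what makes the Forelli--Rudin estimates available; everything else --- checking that $z^j$ and $|z_n|^j$ do not damage the kernel bounds (via $|1-t\zeta|\gtrsim|1-\zeta|$), and verifying that the exponents in the several Forelli--Rudin integrals converge and align to $2s$ --- is routine bookkeeping, and the requirement that $\gamma>1+\frac{\alpha+2}{2}$ be ``sufficiently large'' in the statement is exactly what this bookkeeping demands.
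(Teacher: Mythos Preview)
The paper does not actually prove this lemma: it simply cites Luecking's Lemma~6 and Zhu's Theorem~4.33 and says the result follows by a ``slight modification.'' Your proposal is, in effect, a write-up of that standard atomic-decomposition argument, and the overall architecture --- Gram matrix plus Schur test, the Littlewood--Paley passage to handle $\alpha=-1$ uniformly, the observation $|1-t\zeta|\gtrsim|1-\zeta|$ to absorb the factors $|z_n|^j$ and $z^j$, and the conversion of sums over the separated sequence into area integrals via \eqref{geometric hyperbolic estimates} and Lemma~\ref{finite covering Lemma} --- is exactly what those references do.

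There is one genuine slip. The two-variable Forelli--Rudin bound you invoke,
\[
\int_{\m D}\frac{(1-|z|^2)^{2+\alpha}\,dA(z)}{|1-\overline{z_m}z|^{\mu+1}\,|1-\overline{z_n}z|^{\mu+1}}\ \lesssim\ |1-\overline{z_m}z_n|^{-2s},
\]
is \emph{false} once $\mu+1>4+\alpha$: take $z_n=0$ and let $|z_m|\to 1$; the left side blows up like $(1-|z_m|^2)^{-(\mu-3-\alpha)}$ while the right side equals $1$. Since the hypothesis allows $\gamma$ (hence $\mu$) arbitrarily large, this step cannot stand as written. The repair is immediate and uses only tools you already deploy: do not bound $\mm I_{m,n}$ pointwise, but interchange sum and integral first,
\[
\sum_n\mm I_{m,n}=(1-|z_m|^2)^{s}\int_{\m D}\frac{(1-|z|^2)^{2+\alpha}}{|1-\overline{z_m}z|^{\mu+1}}\Bigl(\sum_n\frac{(1-|z_n|^2)^{s}}{|1-\overline{z_n}z|^{\mu+1}}\Bigr)dA(z),
\]
estimate the inner sum by $\int_{\m D}(1-|w|^2)^{s-2}|1-\bar w z|^{-(\mu+1)}\,dA(w)\lesssim(1-|z|^2)^{-(\alpha+4)/2}$ exactly as in your final display, and then apply the one-variable Forelli--Rudin estimate once more (the remaining weight exponent is $\alpha/2>-1$ and the deficit is again $s$) to get $\sum_n\mm I_{m,n}\lesssim 1$. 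With this correction the rest of your outline goes through unchanged.
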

This result follows from a slight modification of a special case of \cite[Lemma 6]{LUECKINGtraceidealcriteriatoeplitz}. In the modern literature, the aforementioned result is also proved in \cite[Theorem 4.33]{zhu2007operator}.
\subsection{Schatten-von Neumann ideals} We formally introduce the spaces of operators considered in this article. Throughout, we fix a separable Hilbert space, equipped with inner product $\langle \cdot,\cdot\rangle$ and denote its induced norm by $\|\cdot \|$. We use the same notation for the operator norm of a bounded operator acting on $\mm{H}$.
\par For a compact operator $T$ on $\mm{H}$, there exists a decreasing sequence of positive numbers $\{\lambda_n(T)\}_n$ and orthonormal sets $\{e_n\}_n,\{\sigma_n\}_n\subset \mm{H}$ such that
$$T(x)=\sum_{n=1}^{\infty}\lambda_n(T)\langle x,e_n\rangle \sigma_n\qquad x\in\mm{H}\,.$$
The sequence $\{\lambda_n(T)\}_n$ consists of the eigenvalues of $|T|=(T^*T)^{\frac{1}{2}}$, repeated according to their multiplicity, and $\{e_n\}_n$ are the corresponding normalized eigenvectors. The sequence of numbers $\{\lambda_n(T)\}_n$ is called the {\it singular value sequence of} $T$. An equivalent characterization of the singular values is given by the {\it Rayleigh's equation}. Let $n\in\m{N}$, and $\lambda_n(T)$ be the $n$-th singular value of the positive operator $T$. Then,
\begin{equation}\label{Min-Max Theorem}
    \lambda_{n+1}=\min_{x_1,\dots,x_n}\max\{\langle Tx,x\rangle\colon \|x\|=1, x\perp x_i, 1\leq i\leq n\}\,. 
\end{equation}
A proof of this result can be found in \cite[Ch. X.4.3]{dunfordschwarzt1988linear}.
\par For $0<p<\infty$, the Schatten-von Neumann ideal of $\mm{H}$, denoted by $S^p(\mm{H})$, is the space of all compact operators $T$ acting on $\mm{H}$, such that the singular value sequence of $T$ belongs to $\ell^p$. Let
\begin{equation*}
\|T\|_p=\left(\sum_{n=1}^{\infty}\lambda_n(T)^p\right)^{\frac{1}{p}}\,.
\end{equation*}
For $1\leq p<\infty$, $(S^p(\mm{H}),\|\cdot \|_p)$ becomes a Banach space, while for $0<p<1$, $\|\cdot \|_p^p$ defines a metric and $S^p(\mm{H})$ is a complete topological space. Standard textbooks covering the topic of Schatten ideals are \cite{gohberg1978introduction}, \cite{ringrose1971compact}.
\par For $0<p<\infty$, we use the fact that $S^p(\mm{H})$ is a two-sided ideal in the ring of bounded linear operators acting on $\mm{H}$. In particular, if $T\in S^p(\mm{H})$ and $A,B$ are bounded operators on $\mm{H}$, then $ATB\in S^p(\mm{H})$ with
\begin{equation}\label{norm inequality when composed in Schatten}
    \|ATB\|_p\leq \|A\|\|T\|_p\|B\|\,.
\end{equation}
A proof of the above result can be found in \cite[Ch. XI Lemma 9]{dunfordschwarzt1988linear}. Moreover, the Spectral Mapping Theorem imply that the condition $T\in S^p(\mm{H})$ is equivalent to $T^*T\in S^{\frac{p}{2}}(\mm{H})$ and
\begin{equation}\label{equation T*T norm}
    \|T\|_p^p=\|T^*T\|_{p/2}^{p/2}\,.
\end{equation}
In the following sections, we estimate the Schatten norm of the $T_{g,a}$ operator with a quantity containing its norm on orthonormal bases. We present the result below.
\begin{Lemm}\label{Lemma of Schatten norm}
    Let $T$ be a compact operator acting on $\mm{H}$. The following conditions hold,
    \begin{itemize}
        \item[(i)] If $2\leq p<\infty$, then
        $$\|T\|_p^p=\max\biggl\{\sum_{n=1}^{\infty}\|T(e_n)\|^p\colon \{e_n\}_n \text{ orthonormal base}\biggr\}.$$
        \item[(ii)] If $0<p\leq 2$, then
        $$\|T\|_p^p=\min\biggl\{\sum_{n=1}^{\infty}\|T(e_n)\|^p\colon \{e_n\}_n \text{ orthonormal base}\biggr\}.$$
    \end{itemize}
    Both max and min are attained when $\{e_n\}_n$ is chosen to be the sequence of the eigenvectors corresponding to the singular value sequence $\{\lambda_n(T^*T)\}_n$.
\end{Lemm}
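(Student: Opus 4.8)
The plan is to reduce both identities to a single application of Jensen's inequality against a doubly stochastic array. Since $T$ is compact, $T^*T$ is a compact, positive, self-adjoint operator on the separable space $\mm{H}$, so the spectral theorem for compact self-adjoint operators yields an orthonormal basis $\{f_k\}_k$ of $\mm{H}$ consisting of eigenvectors of $T^*T$, with $T^*Tf_k=\lambda_k(T)^2 f_k$, where the nonzero eigenvalues are listed according to multiplicity and the list is completed by zeros corresponding to an orthonormal basis of $\ker T$. For an arbitrary orthonormal basis $\{e_n\}_n$ of $\mm{H}$, expanding $e_n$ in $\{f_k\}_k$ gives
$$\|T(e_n)\|^2=\langle T^*Te_n,e_n\rangle=\sum_{k}\lambda_k(T)^2\,\bigl|\langle e_n,f_k\rangle\bigr|^2.$$
Setting $d_{nk}=|\langle e_n,f_k\rangle|^2\geq 0$, Parseval's identity applied in turn in the two orthonormal bases shows $\sum_n d_{nk}=1$ for every $k$ and $\sum_k d_{nk}=1$ for every $n$; that is, $(d_{nk})$ is doubly stochastic.

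For part (i), when $2\leq p<\infty$ the function $t\mapsto t^{p/2}$ is convex on $[0,\infty)$, so Jensen's inequality applied with the probability weights $\{d_{nk}\}_k$ gives $\|T(e_n)\|^p=\bigl(\sum_k \lambda_k(T)^2 d_{nk}\bigr)^{p/2}\leq \sum_k \lambda_k(T)^p d_{nk}$. Summing over $n$, interchanging the order of summation, and using $\sum_n d_{nk}=1$ produces $\sum_n \|T(e_n)\|^p\leq \sum_k \lambda_k(T)^p=\|T\|_p^p$. For part (ii), when $0<p\leq 2$ the map $t\mapsto t^{p/2}$ is concave and the same computation runs with the inequality reversed, giving $\sum_n\|T(e_n)\|^p\geq \|T\|_p^p$. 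In both cases, the choice $\{e_n\}_n=\{f_k\}_k$ makes $d_{nk}=\delta_{nk}$, so $\sum_n\|T(e_n)\|^p=\sum_k\lambda_k(T)^p=\|T\|_p^p$ exactly; this simultaneously shows the bounds are sharp and that the extremum is attained at the eigenvectors of $T^*T$, as claimed.

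I do not expect a genuine obstacle here; the points requiring care are purely of a bookkeeping nature. First, one must accommodate a possibly infinite-dimensional $\ker T$, which is done by adjoining an orthonormal basis of it to $\{f_k\}_k$ with eigenvalue $0$; in the separable setting this basis is at most countable. Second, the doubly stochastic relations must be justified in infinite dimensions, but this is precisely Parseval's identity. Third, one must check convergence of the series entering Jensen's inequality and the legitimacy of interchanging the order of summation; both are immediate since the eigenvalue sequence is bounded and all terms are nonnegative. Finally, the case $T\notin S^p(\mm{H})$ needs no separate treatment: the inequalities above hold with $\|T\|_p^p=+\infty$ and the eigenvector basis still realises this value, so the stated $\max$ and $\min$ identities persist; it is also harmless, if one prefers, to take the extremum over orthonormal sequences rather than full bases, since completing such a sequence by vectors of $\ker T$ does not alter $\sum_n\|T(e_n)\|^p$.
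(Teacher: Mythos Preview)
Your argument is correct. The paper does not give a self-contained proof but simply refers to \cite[Theorem 1.27, Proposition 1.31, Corollary 1.32]{zhu2007operator} applied to the positive operator $T^*T$; unpacking those references, one finds precisely the convexity/concavity argument with a doubly stochastic array that you have written out, so your approach coincides with the cited one in substance while being fully explicit.
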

One can prove this result by applying the results of \cite[Theorem 1.27]{zhu2007operator} and \cite[Proposition 1.31, Corollary 1.32]{zhu2007operator} to $T^*T$. This leads to two significant consequences. First, it makes necessary the distinction between two separate proofs for Theorem \ref{Theorem of paper Schatten class membership}, one for the case $p\geq 2$ and one for $0<p<2$. Second, it establishes that the membership of an operator in a Schatten-von Neumann ideal can be determined using an equivalent norm on the Hilbert space, rather than the norm originally equipped on it. This observation plays a fundamental role in allowing the computation of these norms equivalently via the Littlewood-Paley formula \eqref{Littlewood Paley Bergman}. \par Finally, for $0<p<2$, we shall require an estimate from above of the Schatten norm, proved in \cite[Proposition 1.29]{zhu2007operator}.
\begin{Lemm}\label{0<p<2 Schatten norm estimation}
    Let $T$ be a compact operator acting on $\mm{H}$ and $0<p\leq 2$. Then, for any orthonormal basis $\{e_n\}_n\subset \mm{H}$, we have
    $$\|T\|_{p}^p\leq \sum_{n=1}^{\infty}\sum_{k=1}^{\infty}|\langle T(e_n),e_k\rangle|^p\,.$$
\end{Lemm}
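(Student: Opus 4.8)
The plan is to deduce this estimate directly from the minimum characterization of the Schatten norm recorded in Lemma \ref{Lemma of Schatten norm}(ii), thereby reducing the whole matter to a purely scalar inequality between the $\ell^p$ and $\ell^2$ quasinorms. The crucial observation is that the competitor basis in that minimization and the basis used to expand each vector norm can be taken to be one and the same, namely the basis $\{e_n\}_n$ appearing in the statement.

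First I would fix the orthonormal basis $\{e_n\}_n$ of the statement. Since $0<p\leq 2$, Lemma \ref{Lemma of Schatten norm}(ii) asserts that $\|T\|_p^p$ equals the minimum of $\sum_n \|T(e_n)\|^p$ taken over all orthonormal bases of $\mm{H}$. As the given basis is one admissible competitor in this minimization, I immediately obtain
$$\|T\|_p^p\leq \sum_{n=1}^{\infty}\|T(e_n)\|^p\,.$$
It then remains only to bound each term $\|T(e_n)\|^p$ by $\sum_k|\langle T(e_n),e_k\rangle|^p$. For this I would expand each norm via Parseval's identity with respect to the same complete orthonormal system, so that $\|T(e_n)\|^2=\sum_k|\langle T(e_n),e_k\rangle|^2$, and then invoke the elementary inequality $\left(\sum_k|a_k|^2\right)^{p/2}\leq \sum_k|a_k|^p$, valid for every scalar sequence whenever $0<p\leq 2$. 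This last inequality is simply the nesting $\|\cdot\|_{\ell^2}\leq \|\cdot\|_{\ell^p}$, which one checks by normalizing $s=\sum_k|a_k|^2$ and using that $x^{p/2}\geq x$ for $x\in[0,1]$ when $p/2\leq 1$. Applying it with $a_k=\langle T(e_n),e_k\rangle$ yields $\|T(e_n)\|^p\leq \sum_k|\langle T(e_n),e_k\rangle|^p$, and summing over $n$ and combining with the previous display gives the assertion.

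There is no serious obstacle here; the only point requiring care is that the minimum characterization must be applied to the very basis appearing in the statement, so that the competitor basis coincides with the basis used to expand $\|T(e_n)\|$, after which the argument is entirely scalar. Should one prefer to avoid Lemma \ref{Lemma of Schatten norm} altogether, an alternative route is complex interpolation of the linear map sending a matrix $(a_{nk})$ to the operator it represents in the basis $\{e_n\}_n$: this map is a contraction $\ell^1\to S^1(\mm{H})$ by the triangle inequality in the trace class (each matrix unit being a rank-one operator of unit trace norm) and an isometry $\ell^2\to S^2(\mm{H})$ by the Hilbert--Schmidt identity, so interpolation yields the bound for $1\leq p\leq 2$, while the $p$-triangle inequality in $S^p(\mm{H})$ covers $0<p\leq 1$. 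The first approach is shorter and entirely self-contained within the present framework, so I would adopt it.
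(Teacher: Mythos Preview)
Your argument is correct. The paper does not supply its own proof of this lemma; it simply cites \cite[Proposition 1.29]{zhu2007operator}. Your route via Lemma \ref{Lemma of Schatten norm}(ii) followed by Parseval and the scalar embedding $\ell^p\hookrightarrow\ell^2$ for $0<p\leq 2$ is exactly the standard derivation (and is essentially how Zhu proves it), so there is nothing to add.
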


\subsection{Some further lemmas}
Finally, we require some technical results to overcome the possible cancellations between the terms consisting the $T_{g,a}$ operator. The first one is proved in \cite[Lemma 2.9]{NikolaidisChalmoukis2024}.
 \begin{Lemm}\label{algebraic lhmma}
    Let $f_0,f_1,\dots,f_{n-1}$ be complex valued functions on the unit disc. Given the system of linear equations
    $$D_j(z) = \sum_{k=0}^{n-1}|z|^{jk}f_k(z)\frac{(1-|z|^2)^n}{(1-|z|^{j+2})^k}\qquad j=0,\dots,n-1\,,$$
    then for each $0\leq k\leq n-1$,
    $$f_k(z)(1-|z|^2)^{n-k}=\sum_{j=0}^{n-1}b_{jk}(z) D_j(z),\qquad 0<
    |z|<1,$$
    where $b_{jk}$ are bounded when $\frac{1}{2}<|z|<1.$
\end{Lemm}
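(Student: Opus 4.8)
The plan is to recognize the given system as a \emph{Vandermonde} system after renormalizing the unknowns. Writing $t=|z|$ and introducing the new unknowns $u_k=f_k(z)(1-t^2)^{n-k}$---exactly the quantities we wish to solve for---a direct substitution into the definition of $D_j$ collapses the powers of $(1-t^2)$ and $(1-t^{j+2})$: the $k$-th summand becomes $t^{jk}u_k(1-t^2)^k(1-t^{j+2})^{-k}$, so that
\begin{equation*}
D_j=\sum_{k=0}^{n-1}x_j^{\,k}\,u_k,\qquad x_j:=\frac{t^j(1-t^2)}{1-t^{j+2}},\qquad j=0,\dots,n-1.
\end{equation*}
Thus $D=Vu$, where $V=(x_j^k)_{j,k=0}^{n-1}$ is the Vandermonde matrix of the nodes $x_0,\dots,x_{n-1}$, and the desired identity is precisely $u=V^{-1}D$, with $b_{jk}$ the $(k,j)$ entry of $V^{-1}$.

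First I would verify that the nodes are pairwise distinct for every $t\in(0,1)$, which guarantees invertibility of $V$. Supposing $x_i=x_j$ and cancelling the common nonzero factor $1-t^2$, cross-multiplication reduces $\frac{t^i}{1-t^{i+2}}=\frac{t^j}{1-t^{j+2}}$ to $t^i=t^j$, which forces $i=j$ since $0<t<1$. Hence $V$ is invertible throughout $(0,1)$ and the system has the unique solution $u=V^{-1}D$, establishing the stated representation of each $f_k(z)(1-t^2)^{n-k}$.

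The core of the argument is the boundedness of the entries of $V^{-1}$ on $(\tfrac12,1)$. Writing $V^{-1}=\adj(V)/\det V$, the entries of $\adj(V)$ are fixed polynomials in the $x_j$, so it suffices to control the nodes and the determinant. Each $x_j$ is continuous on $(0,1)$ and, since $1-t^2\sim 2(1-t)$ and $1-t^{j+2}\sim(j+2)(1-t)$ as $t\to1^-$, extends continuously to $t=1$ with $x_j(1)=\frac{2}{j+2}$; in particular the $x_j$, and hence the adjugate entries, are bounded on $[\tfrac12,1]$. For the determinant I would use $\det V=\prod_{0\le i<j\le n-1}(x_j-x_i)$. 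By the distinctness computation the differences $x_j-x_i$ never vanish on $(0,1)$, and at $t=1$ they tend to $\frac{2}{j+2}-\frac{2}{i+2}\neq0$; thus each factor is a continuous, nowhere-zero function on the compact interval $[\tfrac12,1]$, hence bounded away from $0$. Consequently $|\det V|\ge c>0$ there, so $V^{-1}$ has entries bounded on $(\tfrac12,1)$, which is exactly the asserted bound on $b_{jk}$.

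The main obstacle is precisely this uniform lower bound on $|\det V|$ near $t=1$, where each individual quotient $\frac{1-t^2}{1-t^{j+2}}$ is of indeterminate form and a naive estimate of the Vandermonde product threatens to degenerate. The resolution is to avoid estimating the determinant directly and instead pass to the continuous extension of the nodes to $[\tfrac12,1]$, invoking compactness together with the distinctness of the limiting nodes $\frac{2}{j+2}$; the algebraic distinctness on $(0,1)$ then dovetails with the limit values to keep every factor separated from zero.
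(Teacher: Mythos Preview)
Your argument is correct. The substitution $u_k=f_k(z)(1-t^2)^{n-k}$ indeed reduces the system to a Vandermonde system $D_j=\sum_k x_j^k u_k$ with nodes $x_j=t^j(1-t^2)/(1-t^{j+2})$; the distinctness check and the continuous extension $x_j(1)=\tfrac{2}{j+2}$ on the compact interval $[\tfrac12,1]$ give both invertibility and the uniform lower bound on $|\det V|$, hence the boundedness of the $b_{jk}$.

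As for comparison: the paper does not actually prove this lemma. It merely cites it as \cite[Lemma~2.9]{NikolaidisChalmoukis2024} and moves on. Your proposal thus supplies a complete, self-contained proof where the paper provides none, and the Vandermonde reduction you use is the natural route (and is in fact the argument in the cited reference). There is nothing to correct.
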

 For the second result, we highlight that is a specific case of \cite[Lemma 2.5]{arroussi2024new}. The reader can optimize the result as Gaussian elimination of rows, as explained in the proof of \cite[Proposition 2.6]{arroussi2024new}. He can verify also its strength in the proofs of \cite[Theorem 1.1 and Theorem 1.2]{arroussi2024new}
\begin{Lemm}\label{Lemma of Arrousi on linear algebra}
Let $f_0,f_1,\dots,f_{n-1}$ be complex valued functions on the unit disc and $w\in \m {D}$. Given the linear system
$$D_{w,j}(z)=\sum_{k=0}^\lambda f_k(z)(1-\overline{w}z)^jK_{w,j}^{(k)}(z)\,,$$
there exists functions $b_{j}$, $0\leq j\leq \lambda$ bounded in $\m{D}$, such that
$$\frac{f_{\lambda}(z)}{(1-\overline{w}z)^{\gamma+\lambda}}=\sum_{j=0}^{\lambda}b_j(z)D_{w,j}(z)\,.$$
\end{Lemm}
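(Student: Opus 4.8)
The plan is to read the identity as the explicit solution of a square linear system and to produce the coefficients $b_j$ through Cramer's rule; the entire difficulty then collapses onto showing that the determinant of the system is a nonzero constant, which I will extract from a Wronskian computation. First I would record that the system is genuinely $(\lambda+1)\times(\lambda+1)$: only the unknowns $f_0,\dots,f_\lambda$ appear (the inner sum stops at $k=\lambda$), and the equations are indexed by $j=0,\dots,\lambda$. Writing $u=1-\overline{w}z$ and recalling $K_{w,j}(z)=z^{j}/(1-\overline{w}z)^{\gamma+j}$, a direct application of the Leibniz rule to $z^{j}\cdot u^{-(\gamma+j)}$ shows that
$$(1-\overline{w}z)^j K_{w,j}^{(k)}(z)=u^{-(\gamma+k)}P_{jk}(z),$$
where each $P_{jk}$ is a polynomial in $z$, $\overline{w}$ and $u$ with fixed numerical coefficients depending only on $\gamma,j,k$. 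Since $|z|\le 1$, $|w|\le 1$ and $|u|=|1-\overline{w}z|\le 2$ on $\m{D}$, every $P_{jk}$ is bounded on $\m{D}$ by a constant depending only on $\lambda$ and $\gamma$, uniformly in $w$. Introducing the normalized unknowns $g_k(z)=f_k(z)/(1-\overline{w}z)^{\gamma+k}$, the system becomes $D_{w,j}=\sum_{k=0}^{\lambda}P_{jk}\,g_k$, and the target quantity is exactly $g_\lambda=f_\lambda/(1-\overline{w}z)^{\gamma+\lambda}$. Thus it suffices to invert the bounded matrix $P=(P_{jk})$ and read off its $\lambda$-th coordinate.

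The heart of the argument is the evaluation of $\det P$. Here I would avoid expanding the polynomials $P_{jk}$ and instead use the relation $P_{jk}=u^{\gamma+j+k}K_{w,j}^{(k)}(z)$, immediate from the factorization above. Pulling the factor $u^{\gamma+j}$ out of each row $j$ and $u^{k}$ out of each column $k$ gives
$$\det P=u^{(\lambda+1)\gamma+\lambda(\lambda+1)}\,W\bigl(K_{w,0},\dots,K_{w,\lambda}\bigr)(z),$$
where $W\bigl(K_{w,0},\dots,K_{w,\lambda}\bigr)=\det\bigl(K_{w,j}^{(k)}\bigr)_{0\le j,k\le\lambda}$ is the Wronskian determinant. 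To evaluate it I would use the substitution $v=v(z)=z/(1-\overline{w}z)$, for which $K_{w,j}=u^{-\gamma}v^{\,j}$ and $v'=(1-\overline{w}z)^{-2}=u^{-2}$. Two standard Wronskian identities then finish the computation: the scaling rule $W(h f_0,\dots,h f_\lambda)=h^{\lambda+1}W(f_0,\dots,f_\lambda)$ applied with $h=u^{-\gamma}$, and the change-of-variable rule $W_z(v^0,\dots,v^\lambda)=(v')^{\lambda(\lambda+1)/2}\,W_v(1,v,\dots,v^\lambda)$ combined with the elementary evaluation $W_v(1,v,\dots,v^\lambda)=\prod_{k=0}^{\lambda}k!$. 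Substituting $v'=u^{-2}$, the powers of $u$ cancel exactly and one obtains the clean identity $\det P=\prod_{k=0}^{\lambda}k!$, a nonzero constant independent of $z$ and $w$.

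With the determinant in hand, Cramer's rule yields $g_\lambda=\sum_{j=0}^{\lambda}b_j D_{w,j}$ with $b_j=(-1)^{j+\lambda}\,\mm{M}_{j\lambda}\big/\prod_{k=0}^{\lambda}k!$, where $\mm{M}_{j\lambda}$ is the minor of $P$ obtained by deleting row $j$ and column $\lambda$. Each such minor is a polynomial in the bounded entries $P_{jk}$, hence bounded on $\m{D}$ by a constant depending only on $\lambda$ and $\gamma$ (again uniformly in $w$), and dividing by the nonzero constant $\prod_{k=0}^{\lambda}k!$ preserves boundedness. Since $g_\lambda=f_\lambda/(1-\overline{w}z)^{\gamma+\lambda}$, this is exactly the asserted representation with bounded coefficients $b_j$, and it coincides with the Gaussian elimination of rows indicated in the statement.

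I expect the main obstacle to be precisely the determinant step. A priori the original coefficient matrix degenerates as $\overline{w}z\to 1$, since the factors $u^{-(\gamma+k)}$ blow up, so it is not at all clear that the solution has \emph{bounded} coefficients. The resolution is the observation that, after the normalization to $P$, the determinant is not merely bounded away from zero but is an absolute constant, which is what the Wronskian identities deliver; the only points demanding real care are verifying these two Wronskian rules (in particular the exponent $\lambda(\lambda+1)/2$ in the change of variable) and checking the uniform boundedness of the reduced entries $P_{jk}$, both of which are routine once the factorization $K_{w,j}=u^{-\gamma}v^{\,j}$ is in place.
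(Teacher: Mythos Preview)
Your argument is correct. The paper does not actually prove this lemma; it cites it as a special case of \cite[Lemma~2.5]{arroussi2024new} and only remarks that the solution can be obtained ``as Gaussian elimination of rows'' in the spirit of \cite[Proposition~2.6]{arroussi2024new}. Your route is different and in fact more transparent: instead of row-reducing, you normalize the unknowns to $g_k=f_k/(1-\overline{w}z)^{\gamma+k}$, recognize the coefficient matrix $P=(P_{jk})$ with $P_{jk}=(1-\overline{w}z)^{\gamma+j+k}K_{w,j}^{(k)}$ as a rescaled Wronskian, and evaluate $\det P$ exactly via the substitution $v=z/(1-\overline{w}z)$, the scaling rule $W(hf_0,\dots,hf_\lambda)=h^{\lambda+1}W(f_0,\dots,f_\lambda)$, and the change-of-variable identity for Wronskians. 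The outcome $\det P=\prod_{k=0}^{\lambda}k!$ is sharper than what Gaussian elimination typically yields: it shows the determinant is a \emph{nonzero absolute constant}, so Cramer's rule immediately gives bounded $b_j$ as polynomials in the bounded entries $P_{jk}$ divided by this constant, uniformly in $w$. The Gaussian-elimination approach referenced by the paper reaches the same conclusion but has to track the boundedness of the pivots step by step; your Wronskian computation bypasses that bookkeeping entirely. Both approaches rest on the same underlying observation---the matrix becomes triangular after the change of variable $v$---but yours makes the mechanism explicit and yields the constant in closed form.
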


  \section{Proof of Theorem \ref{Theorem of paper Schatten class membership} for \texorpdfstring{$2\leq p<\infty$}{}}

 \begin{proof}[Proof of Theorem \ref{Theorem of paper Schatten class membership} for $2\leq p<\infty$]
  Let $\alpha\geq -1$. Fix $0\leq j\leq n-1$ and $r>0$. Consider  $\{z_n\}_n$ a $r$-lattice on $\m{D}$ and set $$G(z,w)=\mathlarger{\mathlarger{\sum_{k=0}^{n-1}}}\frac{a_k(b)_k|w|^{jk}\overline{w}^kg^{(n-k)}(z)}{(1-|w|^j\overline{w}z)^k}\qquad z,w\in\m{D}\,.$$
 
 Let 
$\mathcal{W}=\{w_n\}_{n}$ be a sequence of points such that
\begin{itemize}
    \item[i)] $w_n\in \overline{D(z_n,r)}$;
    \item[ii)] $w_n$ is a point where the function ${\displaystyle |G(z,z)|(1-|z|^2)^{n}}$ takes its maximum value in $\overline{D(z_n,r)}.$
\end{itemize}
$\mathcal{W}$ may not be hyperbolic separated, as the hyperbolic discs $D(z_n,r)$ may have non-trivial intersections. However, we can apply Lemma \ref{finite covering Lemma}, to partition the sequence $\{z_n\}_n$ into $N$ disjoint subsequences such that within each subsequence, the hyperbolic discs $D(z_n,r)$ are pairwise disjoint and $N$ depending only on $r$. Consequently, the corresponding sequence $\{w_n\}_n$ can also be decomposed into $N$ disjoint subsequences, each of which is hyperbolically separated. In the following, for simplicity, we denote again as $\mathcal{W}$ each such corresponding $r$-hyperbolically separated subsequence of $\mm{W}$.
    \par Now, assuming that $T_{g,a}\in S^p$, we equivalently have $T=(T_{g,a})^*T_{g,a}\in S^{\frac{p}{2}}(A^2_\alpha)$ with the corresponding Schatten norms satisfying \eqref{equation T*T norm}. Fix an orthonormal base $\{e_n\}_n\subset A^2_\alpha$ and define the following operator on $A^2_\alpha$, given by 
    
    $$A_j(f)(z)=\sum_{n=1}^{\infty}\langle f,e_n\rangle_{A^2_\alpha}\frac{(1-|w_n|^2)^{\gamma-\frac{\alpha+2}{2}}}{(1-|w_n|^j\overline{w_n}z)^\gamma}\qquad f\in A^2_\alpha\,.$$
    According to Lemma \ref{Operator Lhmma pgeq 2} and Parseval's identity, $A_j$ is bounded on $A^2_\alpha$. Consequently, according to \eqref{norm inequality when composed in Schatten}, we have
    $T_j=A_j^*TA_j\in S^{\frac{p}{2}}(A^2_\alpha)$ and there exists a constant $C$, depending only on $\gamma,a,r$, such that
    $$\|T_j\|_{p/2}^{p/2}\leq C\|T_{g,a}\|_{p}^p\,.$$
    Then, by the means of Lemma \ref{Lemma of Schatten norm} and \eqref{Littlewood Paley Bergman}, there exists a constant $C$, depending only on $n$, such that
\begin{align*}
\|T_j\|_{p/2}^{p/2}&\geq \sum_{\lambda=1}^{\infty}\|T_{g,a}(A_j(e_\lambda))\|_{A^2_\alpha}^{p}\\
   &\geq C\sum_{\lambda=1}^{\infty}\left(\int_{\m D}
   \left|\sum_{k=0}^{n-1}a_k\frac{(b)_k|w_\lambda|^{jk}\overline{w_\lambda}^k(1-|w_\lambda|^2)^{b-\frac{\alpha+2}{2}}}{(1-|w_\lambda|^j\overline{w_\lambda}z)^{b+k}}g^{(n-k)}(z)\right|^2dA_{2n+\alpha}(z)\right)^{p/2}\\
   &\geq C\sum_{\lambda=1}^{\infty}\left(\int_{D(w_\lambda,r)}
   \left|\sum_{k=0}^{n-1}a_k\frac{(b)_k|w_\lambda|^{jk}\overline{w_\lambda}^k(1-|w_\lambda|^2)^{b-\frac{\alpha+2}{2}}}{(1-|w_\lambda|^j\overline{w_\lambda}z)^{b+k}}g^{(n-k)}(z)\right|^2dA_{2n+\alpha}(z)\right)^{p/2}
\end{align*}
Henceforth, with the help of hyperbolic estimates \eqref{geometric hyperbolic estimates}, we are able to find another constant $C$, this time depending on $n,r$, such that
\begin{align}\label{equation to reach pgeq2}
    \|T_j\|_{p/2}^{p/2}
   &\geq C\sum_{\lambda=1}^{\infty}\left(\int_{D(w_\lambda,r)}
   \left|\sum_{k=0}^{n-1}a_k\frac{(b)_k|w_\lambda|^{jk}\overline{w_\lambda}^k}{(1-|w_\lambda|^j\overline{w_\lambda}z)^{k}}g^{(n-k)}(z)\right|^2(1-|w_\lambda|^2)^{2n-2}dA(z)\right)^{p/2}\nonumber
\end{align}
The function $G(z,w_\lambda)$ is subharmonic in the first variable, hence by subharmonicity  over the hyperbolic disc $D(w_\lambda,r)$, see \eqref{subharmonicity hyperbolic estimate},  we can find a positive constant $C$, depending on $r$, such that

\begin{align*}
    \left|\sum_{k=0}^{n-1}a_k\frac{(b)_k|w_\lambda|^{jk}\overline{w_\lambda}^k}{(1-|w_\lambda|^{j+2})^{k}}g^{(n-k)}(w_\lambda)\right|^p&\leq C\left(\int_{D(w_\lambda,r)}
   \left|\sum_{k=0}^{n-1}a_k\frac{(b)_k|w_\lambda|^{jk}\overline{w_\lambda}^k}{(1-|w_\lambda|^j\overline{w_\lambda}z)^{k}}g^{(n-k)}(z)\right|^2\frac{dA(z)}{(1-|w_\lambda|^2)^2}\right)^{p/2} 
\end{align*}
We combine all the aforementioned estimates, to find a constant $C$, depending on $n,r$ such that
$$\|T_j\|_{p/2}^{p/2}\geq C \sum_{\lambda=1}^{\infty} \left|\sum_{k=0}^{n-1}a_k\frac{(b)_k|w_\lambda|^{jk}\overline{w_\lambda}^k}{(1-|w_\lambda|^{j+2})^{k}}g^{(n-k)}(w_\lambda)(1-|w_\lambda|^2)^{n}\right|^p\,.$$
Finally, taking into account that  $\{z_\lambda\}_\lambda$ is an $r$-lattice and incorporating a standard hyperbolic estimate \eqref{geometric hyperbolic estimates}, we find a constant $C$, depending on all the parameters, $n,r,b,\alpha$ and not on $g\in \Hol(\m D)$, such that
\begin{align*}
    \int_{\m{D}}&|G(z,z)|^p(1-|z|^2)^{np}\frac{dA(z)}{(1-|z|^2)^2}\leq \sum_{\lambda=1}^{\infty}\int_{D(z_\lambda,r)}\left|G(z,z)(1-|z|^2)^n\right|^{p}\frac{dA(z)}{(1-|z|^2)^2}\\
    &\leq N \sum_{\lambda=1}^{\infty}\left|\sum_{k=0}^{n-1}a_k\frac{(b)_k|w_\lambda|^{jk}\overline{w_\lambda}^kg^{(n-k)}(w_\lambda)(1-|w_\lambda|^2)^n}{(1-|w_\lambda|^{j+2})^{k}}\right|^p\int_{D(z_\lambda,r)}\frac{dA(z)}{(1-|z|^2)^2}\\
    &\leq NC \sum_{\lambda=1}^{\infty}\left|\sum_{k=0}^{n-1}a_k\frac{(b)_k|w_\lambda|^{jk}\overline{w_\lambda}^k}{(1-|w_\lambda|^{j+2})^{k}}g^{(n-k)}(w_\lambda)(1-|w_\lambda|^2)^n\right|^p\\
    &\leq NC\|T_{g,a}\|_p^p<\infty\,.
\end{align*}
   \par The next step is to use Lemma \ref{algebraic lhmma} for $f_k(z)=a_k(b)_k\overline{z}^kg^{(n-k)}(z)$. In the notation of Lemma \ref{algebraic lhmma}, we have proved that 
$$D_j\in L^p\left(\m{D},\frac{dA(z)}{(1-|z|^2)^2}\right)\qquad j=0,\dots,\lambda.$$

 Therefore, for as $a_\lambda\neq 0$, the function $a_\lambda g^{(n-\lambda)}(z)(1-|z|)^{n-\lambda}$ in $ \frac{1}{2}<|z|<1$ can be written as a linear combination of products of bounded functions and the functions $D_j$. Utilizing also that $g\in \Hol(\m D)$, we can show that,
$$\int_{\m{D}}|g^{(n-\lambda)}(z)|^p(1-|z|^2)^{(n-\lambda)p}\frac{dA(z)}{(1-|z|^2)^2}<\infty$$
proving that $g\in \mm{B}_p$.\\
 \end{proof}

\section{Proof of Theorem \ref{Theorem of paper Schatten class membership} for \texorpdfstring{$0<p<2$}{}}
\begin{proof}[Proof of Theorem \ref{Theorem of paper Schatten class membership} for \texorpdfstring{$0<p<2$}{}]
 Let $\alpha\geq -1$ and assume that $T_{g,a}\in S^p(A^2_\alpha)$. Equivalently, $ (T_{g,a})^*T_{g,a}\in S^{\frac{p}{2}}(A^2_\alpha)$. We fix an $r>0$ and let $\{z_\lambda\}_\lambda$ be an $r$-lattice in $\m{D}$. For a given $R>2r$ large enough, we can apply Lemma \ref{finite covering Lemma}, to partition $\{z_\lambda\}_\lambda$ into $N$ subsequences such that the hyperbolic distance between any two points in each subsequence is at least $R$, and thus allowing the hyperbolic discs in each subsequence being mutually disjoint. Let $\{\zeta_n\}_n$ be such a subsequence and consider the following sesquilinear form
 
        $$ S_g(f,h)=\sum_{m=1}^{\infty}\int_{D(\zeta_m,r)}T_{g,a}^{(n)}(f)(z)\overline{T_{g,a}^{(n)}(h)(z)}dA_{2n+\alpha}(z)\qquad f,h\in A^2_\alpha\,.$$
       We verify that $S_g$ is bounded sesquilinear form. To see this, we apply the Cauchy-Schwarz inequality, followed by the Littlewood Paley formula \eqref{Littlewood Paley Bergman} and finally the boundedness of $T_{g,a}$. Thus, we can find a constant $C$, depending only on $n$ and $\|T_{g,a}\|$, such that
       \begin{align*}
        |S_g(f,h)|^2
        &\leq \int_{\m{D}}|T_{g,a}^{(n)}(f)(z)|^2\sum_{m=1}^{\infty}\chi_{D(\zeta_m,r)}dA_{2n+\alpha}(z)\int_{\m{D}}|T_{g,a}^{(n)}(h)(z)|^2\sum_{m=1}^{\infty}\chi_{D(
        \zeta_m,r)}dA_{2n+\alpha}(z)\\
        &\leq \int_{\m{D}}|T_{g,a}^{(n)}(f)(z)|^2dA_{2n+\alpha}(z)\int_{\m{D}}|T_{g,a}^{(n)}(h)(z)|^2dA_{2n+\alpha}(z)\\
        &\leq C\|f\|_{A^2_\alpha}^2\|h\|_{A^2_\alpha}^2
        \end{align*}
        where $\chi$ denotes the characteristic function of a measurable set.

\par A standard application of Riesz Representation Theorem implies that $S_g$ induces a bounded operator, acting on $A^2_\alpha$, which we denote again by $S_g$ and its action on the inner product is given by
        $$\langle S_g(f),h\rangle_{A^2_\alpha}=S_g(f,h)\qquad f,h\in A^2_\alpha\,.$$
        Furthermore, the above estimates show that
        $$0\leq \langle S_g(f),f\rangle_{A^2_\alpha}\leq C\langle ((T_{g,a})^*T_{g,a}) f,f\rangle\qquad f\in A^2_\alpha.$$
        Hence, the characterization of the singular values through Rayleigh's equation \eqref{Min-Max Theorem} implies that $S_g\in S^{\frac{p}{2}}(A^2_\alpha)$ and
        $$\|S_g\|_{p/2}^{p/2}\leq C^{p/2}\|T_{g,a}\|_p^p\,.$$
        Now, we fix $\gamma>1+\frac{\alpha+2}{2}$ sufficiently large, $0\leq j\leq n-1$ and an orthonormal base $\{e_n\}_n\subset A^2_\alpha$. We construct the operator $B_j$ acting on $A^2_\alpha$, given by 
        
        $$B_j(f)(z)=\sum_{n=1}^{\infty}\langle f,e_n\rangle_{A^2_\alpha}(1-|z_n|^2)^{{\gamma+j}-\frac{\alpha+2}{2}}K_{j,z_n}(z)\qquad f\in A^2_\alpha\,. $$
        Once more, Lemma \ref{Operator Lhmma pgeq 2} and Parseval's identity imply that $B_j$ is bounded on $A^2_\alpha$. Consequently, by the ideal property of Schatten-von Neumann ideals \eqref{norm inequality when composed in Schatten},
        we have that the positive operator $F_j=B_j^*S_gB_j\in S^{\frac{p}{2}}(A^2_\alpha)$ and we can find a constant $C$ depending on $\gamma,n,p,r$ such that
        $$\|F_j\|_{p/2}^{p/2}\leq C\|T_{g,a}\|_p^p\,.$$ 
        The next step is to consider the following two compact operators
        \begin{align*}
            D_j(f)(z)&=\sum_{n=1}^{\infty}\langle F_j(e_n),e_n\rangle_{A^2_\alpha} \langle f,e_n\rangle_{A^2_\alpha} e_n\qquad f\in A^2_\alpha\\
            E_j&=F_j-D_j\,.
        \end{align*}
        We are going to estimate from below the Schatten norm of $D_j$. We have that
        \begin{align*}
            \|D_j\|_{p/2}^{p/2}&=\sum_{l=1}^{\infty}\langle F_j(e_l),e_l\rangle^{p/2}=\sum_{l=1}^{\infty} S_g(B_j(e_l),B_j(e_l))^{p/2}\\
            &=\sum_{l=1}^{\infty}\left(\sum_{m=1}^{\infty}\int_{D(\zeta_m,r)}\left|\sum_{k=0}^{\lambda}a_kg^{(n-k)}(z)(1-|\zeta_l|^2)^{\gamma+j-\frac{\alpha+2}{2}}K_{\zeta_l,j}^{(k)}(z)\right|^2dA_{2n+\alpha}(z)\right)^{\frac{p}{2}}\\
            &\geq \sum_{l=1}^{\infty}\left(\int_{D(\zeta_l,r)}\left|\sum_{k=0}^{\lambda}a_kg^{(n-k)}(z)(1-|\zeta_l|^2)^{\gamma+j-\frac{\alpha+2}{2}}K_{\zeta_l,j}^{(k)}(z)\right|^2dA_{2n+\alpha}(z)\right)^{\frac{p}{2}}\\
            &\geq C\sum_{l=1}^{\infty}\left(\int_{D(\zeta_l,r)}\left|\sum_{k=0}^{\lambda}a_kg^{(n-k)}(z)(|1-\overline{\zeta_l} z|)^{j}K_{\zeta_l,j}^{(k)}(z)\right|^2(1-|\zeta_l|^2)^{2\gamma-2+2n}dA(z)\right)^{\frac{p}{2}}
        \end{align*}
        where $C$ is a positive constant depending on $r$, obtained by hyperbolic estimates \eqref{geometric hyperbolic estimates}. For $0\leq j\leq \lambda$, set $f_j(z)=a_{j}g^{(n-j)}(z).$ Then, by the strength of Lemma \ref{Lemma of Arrousi on linear algebra} and multiple use of triangle inequalities, we can find a constant $C$, now depending on $n,p,r$, such that
        \begin{equation}\label{equation1}
            \sum_{k=0}^{\lambda}\|D_k\|_{p/2}^{p/2}\geq C\sum_{l=1}^{\infty}\left(\int_{D(\zeta_l,r)}\left|\frac{a_\lambda g^{(n-\lambda)}(z)}{(1-\overline{\zeta_l}z)^{\gamma+\lambda}}\right|^2(1-|\zeta_l|^2)^{2\gamma-2+2n}dA(z)\right)^{\frac{p}{2}}\,.
        \end{equation}
        The next step is to progress by induction and acquire the former equation for every $0\leq j\leq \lambda$, that is
        \begin{equation}\label{equation2}
            \sum_{k=0}^{\lambda}\|D_k\|_{p/2}^{p/2}\geq C\sum_{l=1}^{\infty}\left(\int_{D(\zeta_l,r)}\left|\frac{a_\lambda g^{(n-j)}(z)}{(1-\overline{\zeta_l}z)^{\gamma+j}}\right|^2(1-|\zeta_l|^2)^{2\gamma-2+2n}dA(z)\right)^{\frac{p}{2}}
        \end{equation}
        To do this, we observe that
        $$\sum_{k=0}^{\lambda-1}a_kg^{(n-k)}K_{\zeta_l,0}^{(k)}(z)=\sum_{k=0}^{\lambda}a_kg^{(n-k)}K_{\zeta_l,0}^{(k)}(z)-\frac{a_\lambda g^{(n-\lambda)}(z)}{(1-\overline{\zeta_l}z)^{\gamma+\lambda}}\overline{\zeta}_l^\lambda$$
        and therefore, the strength of \eqref{equation1} and the use of triangle inequalities, allow us to find a constant $C$, depending on $n,r,p,\gamma$ such that
        \begin{align*}
           \sum_{l=1}^{\infty}\left(\int_{D(\zeta_l,r)}\left|\sum_{k=0}^{\lambda-1}a_kg^{(n-k)}(1-|\zeta_l|^2)^{\gamma-\frac{\alpha+2}{2}}K_{\zeta_l,0}^{(k)}(z)\right|^2(1-|\zeta_l|^2)^{2n}dA_\alpha(z) \right)^{\frac{p}{2}}&\leq C\left(\sum_{k=0}^{\lambda}\|D_k\|_{p/2}^{p/2}\right)\,.
        \end{align*}
         Thus, we reduce the terms of the sum in each occurrence, finally reaching up to \eqref{equation2}. 
        \par Now, let $J$ be the index for which $\|D_j\|_{p/2}^{p/2}$ is maximized. Then, from hyperbolic estimates and the strength of \eqref{equation2}, we conclude that there exists a constant $C_1$, depending on the parameters $n,r,p,\gamma$, such that
        
        \begin{equation}\label{equation to maximimze}
        \|D_J\|_{p/2}^{p/2}\geq C_1\sum_{k=0}^{\lambda}\sum_{l=1}^{\infty}\left(\int_{D(\zeta_l,r)}\left|a_k g^{(n-k)}(z)\right|^2dA_{2(n-k)-2}(z)\right)^{\frac{p}{2}}\,.
        \end{equation}
        The next step is technical and involves the estimation from above of $\|E_J\|_{p/2}^{p/2}$. To do that, we employ the strength of Lemma \ref{0<p<2 Schatten norm estimation}. Subsequently, by the means of H\"older's inequality for series \cite[Theorem 3.3]{zhu2007operator}, and the estimate of \eqref{equation to use for Kiw} and triangle inequalities, we find a constant $C>0$ depending on $n,p$, such that
        \begin{align}
            \|E_J\|_{p/2}^{p/2}&\leq \sum_{\kappa =1}^{\infty}\sum_{\nu=1}^{\infty}|\langle E_J(e_\kappa),e_\nu\rangle|^{p/2}=\sum_{\kappa\neq \nu}|\langle T_J(e_\kappa),e_\nu\rangle|^{p/2}\nonumber\\
            &=\sum_{\kappa\neq \nu}| S_g(B_J(e_\kappa),B_J(e_\nu))|^{p/2}\nonumber\\
            &\leq C\sum_{k=0}^{\lambda}\sum_{\kappa\neq \nu}\sum_{m=1}^{\infty}\left(\int_{D(\zeta_m,r)}|G_{1,k}(z)|dA_{2n+\alpha}(z)\right)^{p/2}+\nonumber\\
            &\qquad\qquad+ C\sum_{\substack{k,s=0\\ k\neq s}}^{\lambda^2-\lambda}\sum_{\kappa\neq \nu}\sum_{m=1}^{\infty}\left(\int_{D(\zeta_m,r)}|G_{2,k,s}(z)|dA_{2n+\alpha}(z)\right)^{p/2}\nonumber\\
            &\doteqdot I_1+I_2\,.
        \end{align}
        where,
        \begin{align*}
           G_{1,k}&\doteqdot \frac{[(1-|\zeta_\kappa|^2)(1-|\zeta_\nu|^2)]^{\gamma+J-\frac{\alpha+2}{2}}}{[|1-\overline{\zeta_k}z||1-\overline{\zeta_\nu}z|]^{\gamma+J+k}}|a_k g^{(n-k)}(z)|^2\\
           &\\
            G_{2,k,s}&\doteqdot\displaystyle a_ka_sg^{(n-k)}(z)g^{(n-s)}(z)[(1-|\zeta_\kappa|^2)((1-|\zeta_\nu|^2)]^{\gamma+J-\frac{\alpha+2}{2}}K_{\zeta_\kappa,J}^{(k)}(z)K_{\zeta_\nu,J}^{(s)}(z)\,.
        \end{align*}

We focus our attention to estimate $I_1$. To start with, we observe that with the strength of hyperbolic estimates \eqref{geometric hyperbolic estimates}, we find a constant $C$ depending only on $r$, such that 
\begin{align}
    \sum_{\kappa\neq \nu}\sum_{m=1}^{\infty}&\left(\int_{D(\zeta_m,r)}\frac{[(1-|\zeta_\kappa|^2)((1-|\zeta_\nu|^2)]^{\gamma+J-\frac{\alpha+2}{2}}}{[|1-\overline{\zeta_\kappa}z||1-\overline{\zeta_\nu}z|]^{\gamma+J+k}}|a_kg^{(n-k)}(z)|^2dA_{2n+\alpha}(z)\right)^{\frac{p}{2}}\leq \nonumber\\
    &\leq C\sum_{\kappa\neq \nu}\sum_{m=1}^{\infty}\frac{[(1-|\zeta_\kappa|^2)((1-|\zeta_\nu|^2)]^{\frac{p(\gamma+J)}{2}-\frac{p(\alpha+2)}{4}}}{[|1-\overline{\zeta_\kappa}\zeta_m||1-\overline{\zeta_\nu}\zeta_m|]^{\frac{p(\gamma+J)}{2}+\frac{pk}{2}}}\left(\int_{D(\zeta_m,r)}|a_kg^{(n-k)}(z)|^2dA_{2n+\alpha}(z)\right)^{\frac{p}{2}}\nonumber\\
    &=C\sum_{m=1}^{\infty}\left(\int_{D(\zeta_m,r)}|a_kg^{(n-k)}(z)|^2dA_{2n+\alpha}(z)\right)^{\frac{p}{2}}\sum_{\kappa\neq \nu}\frac{[(1-|\zeta_\kappa|^2)((1-|\zeta_\nu|^2)]^{\frac{p(\gamma+J)}{2}-\frac{p(\alpha+2)}{4}}}{[|1-\overline{\zeta_\kappa}\zeta_m||1-\overline{\zeta_\nu}
    \zeta_m|]^{\frac{p(\gamma+J)}{2}+\frac{pk}{2}}}\label{equation to return back after kernel estimation}
\end{align}
where the final equality is due to Fubini's Theorem. The next step involves the estimation of the double sum ${\kappa\neq \nu}$ appearing in \eqref{equation to return back after kernel estimation}. Since, this is  a well-established result, we only sketch the argument here, highlighting the key steps. For complete details, we refer the reader to the proof of \cite[Theorem 7.16]{zhu2007operator}. Since the function $h_{w}(z)= \frac{1}{1-\overline{w}z}$ is analytic, an application of estimates \eqref{geometric hyperbolic estimates} and \eqref{subharmonicity hyperbolic estimate} implies that the terms in the sum are bounded above by an integral over the set $D(\zeta_\nu,r)\times D(\zeta_\kappa,r)$. Since the discs $D(\zeta_\nu,r)$ and $D(\zeta_\kappa,r)$ are mutually disjoint, it follows that
$$\bigcup_{\kappa\neq \nu} D(\zeta_\nu,r)\times D(\zeta_\kappa,r)\subset A_R=\{(z,w)\in\m{D}\times \m{D}\colon \beta(z,w)\geq R-2r\}.$$
Thus, we estimate further the sum by enlarging the region of integration to $A_R$. Employing the fact that $A_R$ is M\"obius invariant, we use the change of variables $(z,w)=(\phi_{\zeta_m}(u),\phi_{\zeta_m}(w))$, to find a positive constant $C$ depending on $p,b,\alpha$ such that
\begin{equation}\label{equation to put back}
    \sum_{\kappa\neq \nu}\frac{[(1-|\zeta_\kappa|^2)((1-|\zeta_\nu|^2)]^{\frac{p(\gamma+J)}{2}-\frac{p(\alpha+2)}{4}}}{[|1-\overline{\zeta_\kappa}\zeta_m||1-\overline{\zeta_\nu}
    \zeta_m|]^{\frac{p(\gamma+J)}{2}+\frac{pk}{2}}}\leq C|A_R|(1-|\zeta_m|^2)^{-pk-\frac{p(\alpha+2)}{2}}
\end{equation}
    where $|A_R|$ is the Lebesgue volume of the set $A_R$.
    \par We combine the estimates of \eqref{equation to put back} and \eqref{equation to return back after kernel estimation}, take into account hyperbolic estimates \eqref{geometric hyperbolic estimates} and finally sum over all $0\leq k\leq \lambda$. Hence, we are able to find a constant $C_1>0$, depending on the parameters $n,r,p,\gamma,\alpha$ such that,
    \begin{align}
        I_{1}\leq C_{1}|A_R|\sum_{k=0}^{\lambda}\sum_{m=1}^{\infty}\left(\int_{D(\zeta_m,r)}|a_kg^{(n-k)}(z)|^2dA_{2(n-k)-2}(z)\right)^{\frac{p}{2}}\,.\label{equation of I1k}
    \end{align}
   
    Our next objective is to estimate $I_2$. The key estimate here is to apply the special case of arithmetic mean-geometric mean (AM-GM) inequality, combined with \eqref{equation to use for Kiw}. In particular, for each $0\leq k,s\leq \lambda$, such that $k\neq s$, we have that
    \begin{align}
        |a_{k}g^{(n-k)}(z)K_{\zeta_\kappa,J}^{(k)}(z) a_{s}g^{(n-s)}(z)K_{\zeta_\nu,J}^{(s)}(z)|&\leq \frac{1}{2}(|a_kg^{(n-k)}(z)K_{\zeta_\kappa,J}^{(k)}(z)|^2+|a_sg^{(n-s)}(z)K_{\zeta_\nu,J}^{(s)}(z)|^2)\nonumber\\
        &\leq C\left(\frac{|a_kg^{(n-k)}(z)|^2}{|1-\overline{\zeta_\kappa}z|^{2\gamma+2J+2K}}+\frac{|a_sg^{(n-s)}(z)|^2}{|1-\overline{\zeta_\nu}z|^{2\gamma+2J+2s}}\right)\,. \label{equation G2kh}
    \end{align}
    Thus, we can split the summation over different values of $k$ and $s$. As a consequence of \eqref{equation G2kh}, to estimate $I_2$, it suffices to estimate the following quantities 
    \begin{align}
        \sum_{\kappa\neq \nu}\sum_{m=1}^{\infty}\left(\int_{D(\zeta_m,r)}\frac{[(1-|\zeta_\kappa|^2)(1-|\zeta_\nu|^2)]^{\gamma+J-\frac{\alpha+2}{2}}}{|1-\overline{\zeta_\kappa}z|^{2\gamma+2J+2k}}|a_kg^{(n-k)}(z)|^2dA_{2n+\alpha}(z)\right)^{p/2}&\label{equation1withk}\\
        \sum_{\kappa\neq \nu}\sum_{m=1}^{\infty}\left(\int_{D(\zeta_m,r)}\frac{[(1-|\zeta_\kappa|^2)(1-|\zeta_\nu|^2)]^{\gamma+J-\frac{\alpha+2}{2}}}{|1-\overline{\zeta_\nu}z|^{2\gamma+2J+2s}}|a_hg^{(n-s)}(z)|^2dA_{2n+\alpha}(z)\right)^{p/2}\label{equation2withh}&
    \end{align}
    The estimate of both \eqref{equation1withk} and \eqref{equation2withh} follows a similar approach as the one in \eqref{equation to return back after kernel estimation}. Due to symmetry, we provide the details only for \eqref{equation1withk}. In fact, using hyperbolic estimates \eqref{geometric hyperbolic estimates}, we can find a constant $C$ depending on $r$, such that 
    \begin{align*}
        &\sum_{\kappa\neq \nu}\sum_{m=1}^{\infty}\left(\int_{D(\zeta_m,r)}\frac{[(1-|\zeta_\kappa|^2)(1-|\zeta_\nu|^2)]^{\gamma+J-\frac{\alpha+2}{2}}}{|1-\overline{\zeta_\kappa}z|^{2\gamma+2J+2k}}|a_kg^{(n-k)}(z)|^2dA_{2n+\alpha}(z)\right)^{p/2}\leq \\
        &\leq C\sum_{k\neq\nu}\frac{[(1-|\zeta_\kappa|^2)(1-|\zeta_\nu|^2)]^{p(\gamma+J)/2-\frac{p(\alpha+2)}{4}}}{|1-\overline{\zeta_\kappa}\zeta_m|^{p\gamma+pJ+pk}}\sum_{m=1}^{\infty}\left(\int_{D(\zeta_m,r)}|a_kg^{(n-k)}(z)|^2dA_{2n+\alpha}(z)\right)^{p/2}
    \end{align*}
    Then arguing as in the part of the estimations of $I_{1}$, we can find a constant $C$, depending on $r,\gamma,\alpha$, such that
    \begin{align*}
    \sum_{k\neq\nu}&\frac{[(1-|\zeta_\kappa|^2)(1-|\zeta_\nu|^2)]^{p(\gamma+J)/2-\frac{p(\alpha+2)}{4}}}{|1-\overline{\zeta_\kappa}\zeta_m|^{p\gamma+pJ+pk}}\leq C|A_R|(1-|\zeta_m|^2)^{-pk-p\frac{\alpha+2}{2}}
    \end{align*}
    Finally, we conclude that
    \begin{align*}
        &\sum_{\kappa\neq \nu}\sum_{m=1}^{\infty}\left(\int_{D(\zeta_m,r)}\frac{[(1-|\zeta_\kappa|^2)(1-|\zeta_\nu|^2)]^{\gamma+J-\frac{\alpha+2}{2}}}{|1-\overline{\zeta_\kappa}z|^{2\gamma+2J+2k}}|a_kg^{(n-k)}(z)|^2dA_{2n+\alpha}(z)\right)^{p/2}\leq \\
        &\leq C|A_R|\sum_{m=1}^{\infty}\left(\int_{D(\zeta_m,r)}|a_kg^{(n-k)}(z)|^2dA_{2(n-k)-2}(z)\right)^{p/2}
    \end{align*}
    Consequently, we can find a constant $C_2$, depending on $n,r,\gamma,\alpha,p$, such that
    \begin{equation}\label{I2ks estimate}
    I_2 \leq C_2|A_R|\sum_{k=0}^{\lambda}\sum_{m=1}^{\infty}\left(\int_{D(\zeta_m,r)}|a_kg^{(n-k)}(z)|^2dA_{2(n-k)-2}(z)\right)^{\frac{p}{2}}\,.
    \end{equation}
    So, we combine \eqref{I2ks estimate} with \eqref{equation of I1k} to conclude that there is a constant $C_2$, depending on $n,r,\gamma,\alpha,p$, such that 
    \begin{equation}\label{finalequationofEJ}
        \|E_J\|_{p/2}^{p/2}\leq C_2|A_R|\sum_{k=0}^{\lambda}\sum_{m=1}^{\infty}\left(\int_{D(\zeta_m,r)}|a_kg^{(n-k)}(z)|^2dA_{2(n-k)-2}(z)\right)^{\frac{p}{2}}\,.
    \end{equation}

    \par Now, as $C_1,C_2$ depend only on the fixed parameters $n,r,p,\gamma,\alpha$, we can choose $R>0$, large enough so that $|A_R|<\frac
    {C_1}{C_2}$, in other words, $C_1-|A_R|C_2>0$. Then the strength of \eqref{equation to maximimze} and 
    \eqref{finalequationofEJ} imply that
    \begin{align*}
        \|F_J\|_{p/2}^{p/2}&\geq |\|D_J\|_{p/2}^{p/2}-\|E_J\|_{p/2}^{p/2}|\\
        &\geq (C_1-|A_R|C_2)\sum_{k=0}^{\lambda}\sum_{m=1}^{\infty}\left(\int_{D(\zeta_m,r)}|a_kg^{(n-k)}(z)|^2dA_{2(n-k)-2}(z)\right)^{\frac{p}{2}}\\
        &\geq (C_1-|A_R|C_2)\sum_{m=1}^{\infty}\left(\int_{D(\zeta_m,r)}|a_{\lambda}g^{(n-\lambda)}(z)|^2dA_{2(n-\lambda)-2}(z)\right)^{\frac{p}{2}}\,.
    \end{align*}
    Taking into account all aforementioned estimates, we finally find a constant $C$, depending on the fixed parameters while being independent of the given function $g$, such that
    $$\sum_{m=1}^{\infty}\left(\int_{D(\zeta_m,r)}|a_{\lambda}g^{(n-\lambda)}(z)|^2dA_{2(n-\lambda)-2}(z)\right)^{\frac{p}{2}}\leq C\|T_{g,a}\|_{p/2}^{p/2}\,.$$
    Since the above inequality holds for each one of the $N$ subsequences of $\{\zeta_m\}_m$, we obtain that
    $$\sum_{m=1}^{\infty}\left(\int_{D(z_m,r)}|a_{\lambda}g^{(n-\lambda)}(z)|^2dA_{2(n-\lambda)-2}(z)\right)^{\frac{p}{2}}\leq CN\|T_{g,a}\|_{p/2}^{p/2}\,.$$
    Now, the reader can follow the lines of the proof of \cite[Corollary 1]{DuLiQu2022generalized} to show that the above summability condition implies that
    $$\int_{\m{D}}|g^{(n-\lambda)}(z)|^p(1-|z|^2)^{p(n-\lambda)}\frac{dA(z)}{(1-|z|^2)^2}<\infty\,.$$
    So, if $p>\frac{1}{n-\lambda}$, then we equivalently have that $g\in \mm{B}_p$ and if $p\leq \frac{1}{n-\lambda}$, standard estimates show that $g^{(n-\lambda)}\equiv 0$, implying that $T_{g,a}\equiv 0$.\\
\end{proof}
 
\bibliography{bibliography}
\bibliographystyle{plain}
\end{document}